\begin{document}%
\title[Distribution of short sums of classical Kloosterman sums of prime powers moduli]{Distribution of short sums of classical Kloosterman sums of prime powers moduli}
\author[G. Ricotta]{Guillaume Ricotta}
\address{Universit\'{e} de Bordeaux \\
Institut de Math\'{e}matiques de Bordeaux \\
351, cours de la Lib\'{e}ration \\
33405 Talence cedex \\
France}
\email{Guillaume.Ricotta@math.u-bordeaux.fr}
%
%\author[E. Royer]{Emmanuel Royer}
%\address{Laboratoire de Math\'{e}matiques \\
%Campus universitaire des C\'{e}zeaux \\
%3 place Vasarely \\
%TSA 60026 \\
%CS 60026 \\
%63178 Aubi\`{e}re Cedex \\
%France}
%\email{emmanuel.royer@math.univ-bpclermont.fr}
%
\date{Version of \today} 
\subjclass{11T23, 11L05}
\keywords{Kloosterman sums, moments.}
\begin{abstract}
In \cite{Pe-Ge1}, the author proved, under some very general conditions, that short sums of $\ell$-adic trace functions over finite fields of varying center converges in law to a Gaussian random variable or vector. The main inputs are P.~Deligne's equidistribution theorem, N.~Katz' works and the results surveyed in \cite{MR3338119}. In particular, this applies to $2$-dimensional Kloosterman sums $\mathsf{Kl}_{2,\mathbb{F}_q}$ studied by N.~Katz in \cite{MR955052} and in \cite{MR1081536} when the field $\mathbb{F}_q$ gets large.
\par
This article considers the case of short sums of normalized classical Kloosterman sums of prime powers moduli $\mathsf{Kl}_{p^n}$, as $p$ tends to infinity among the prime numbers and $n\geq 2$ is a fixed integer. A convergence in law towards a real-valued standard Gaussian random variable is proved under some very natural conditions.
\end{abstract}
\maketitle
\begin{center}
\textit{In memory of Prince Rogers Nelson and David Robert Jones. Enjoy your new career in your new purple town.}
\end{center}
\tableofcontents
%...............................................................................................
\section{Introduction and statement of the results}%
Let $p$ be an odd prime number. For $\mathbb{F}_q$ the finite field of cardinality $q$ and of characteristic $p$, $t_q$ a complex-valued function on $\mathbb{F}_q$ and $I_q$ a subset of $\mathbb{F}_q$, the normalized partial sum of $t_q$ over $I_q$ is defined by
\begin{equation*}
S(t_q,I_q)\coloneqq\frac{1}{\sqrt{\abs{I_q}}}\sum_{x\in I_q}t_q(x).
\end{equation*}
where as usual $\abs{I_q}$ stands for the cardinality of $I_q$. Such sums have a long history in analytic number theory, confer \cite[Chapter 12]{IwKo}. The normalization is explained by the fact that in a number theory context one expects the square-root cancellation philosophy. One can define a complex-valued random variable on $\mathbb{F}_q$ endowed with the uniform measure by
\begin{equation*}
\forall x\in\mathbb{F}_q,\quad S(t_q,I_q;x)\coloneqq S(t_q,I_q+x)
\end{equation*}
where as usual $I_q+x$ stands for the translate of $I_q$ by $x$ for any $x$ in $\mathbb{F}_q$.
\par
Given a sequence $t_q$ of $\ell$-adic trace functions over $\mathbb{F}_q$ and a sequence $I_q$ of subsets of $\mathbb{F}_q$, C.~Perret-Gentil got interested in \cite{Pe-Ge1} in the distribution as $q$ and $\abs{I_q}$ tend to infinity of the sequence of complex-valued random variables $S(t_q,I_q;\ast)$ and proved a deep general result under very natural conditions. Let us mention that his general result is not only a generalization but also an improvement over previous works such as \cite{MR0055368}, \cite{MR2756007}, \cite{MR3063909} and \cite{MR1652005}.
\par
Let us state the case of the normalized Kloosterman sums of rank $2$ given by
\begin{equation*}
\forall x\in\mathbb{F}_q,\quad t_q(x)=\mathsf{Kl}_{2,\mathbb{F}_q}(x)\coloneqq\frac{-1}{\sqrt{q}}\sum_{\substack{(x_1,x_2)\in\mathbb{F}_q^\times\times\mathbb{F}_q^\times \\
x_1x_2=x}}e\left(\frac{\text{Tr}_{\mathbb{F}_q\vert\mathbb{F}_p}(x_1+x_2)}{p}\right)\in\mathbb{R}
\end{equation*}
where as usual $e(z)\coloneqq\exp{(2i\pi z)}$ for any complex number $z$.
\par
C.~Perret-Gentil proved the following qualitative result.
\begin{theorem}[C.~Perret-Gentil (Qualitative result)]\label{theo_qual_CPG}
As $q$ and $\abs{I_q}$ tend to infinity with $\log{(\abs{I_q})}=o(\log{(q)})$ then the sequence of real-valued random variables $S(\mathsf{Kl}_{2,\mathbb{F}_q},I_q;\ast)$ converges in law to a real-valued standard Gaussian random variable.
\end{theorem}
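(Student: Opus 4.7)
The natural approach is the method of moments. By the Weil bound $\abs{\mathsf{Kl}_{2,\mathbb{F}_q}(x)} \leq 2$, the real-valued random variables $S(\mathsf{Kl}_{2,\mathbb{F}_q}, I_q; \ast)$ have uniformly bounded moments of every order, so to establish convergence in law to a standard Gaussian it suffices to show that for every fixed integer $k \geq 1$,
\[
\mathbb{E}\bigl[S(\mathsf{Kl}_{2,\mathbb{F}_q}, I_q; \ast)^k\bigr] \longrightarrow M_k \coloneqq \begin{cases} (k-1)!! & \text{if } k \text{ is even,} \\ 0 & \text{if } k \text{ is odd,} \end{cases}
\]
as $q, \abs{I_q} \to \infty$ with $\log\abs{I_q} = o(\log q)$. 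Expanding the $k$-th power and swapping the order of summation,
\[
\mathbb{E}[S^k] = \frac{1}{\abs{I_q}^{k/2}} \sum_{(y_1, \ldots, y_k) \in I_q^k} \mathcal{C}_q(y_1, \ldots, y_k), \quad \mathcal{C}_q(\mathbf{y}) \coloneqq \frac{1}{q} \sum_{x \in \mathbb{F}_q} \prod_{i=1}^k \mathsf{Kl}_{2,\mathbb{F}_q}(y_i + x),
\]
so the task reduces to understanding the $k$-point correlation sums $\mathcal{C}_q(\mathbf{y})$.

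For each tuple I collect equal entries: let $\{z_1, \ldots, z_r\}$ be the distinct values taken by the $y_i$'s, with respective multiplicities $m_1, \ldots, m_r$. The central input is the deep work of N.~Katz on the Kloosterman sheaf $\mathcal{K}\ell_2$: it is geometrically irreducible of rank $2$ with geometric monodromy group $\mathrm{SL}_2$ (equivalently $\mathrm{SU}(2)$ in the unitary picture), and its additive translates are pairwise geometrically non-isomorphic, so a Goursat-Kolchin-Ribet argument shows that the geometric monodromy of $\bigoplus_{j=1}^{r} [+z_j]^*\mathcal{K}\ell_2$ is the full product $\mathrm{SL}_2^{r}$. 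P.~Deligne's equidistribution theorem then yields
\[
\mathcal{C}_q(\mathbf{y}) = \prod_{j=1}^{r} \int_{\mathrm{SU}(2)} \mathrm{tr}(g)^{m_j}\, dg \;+\; O_k\bigl(q^{-1/2}\bigr),
\]
the main term factorising by Haar independence across the $r$ factors. The Sato-Tate moment $\int_{\mathrm{SU}(2)}\mathrm{tr}(g)^m\, dg$ vanishes for $m$ odd, equals $1$ for $m=2$, and equals the Catalan number $\frac{1}{m/2+1}\binom{m}{m/2}$ for general even $m$.

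It remains to organise the combinatorics. The number of tuples $(y_1, \ldots, y_k) \in I_q^k$ whose associated set-partition of $\{1,\ldots,k\}$ has $r$ blocks of prescribed sizes $(m_1, \ldots, m_r)$ equals $\abs{I_q}(\abs{I_q}-1)\cdots(\abs{I_q}-r+1) = \abs{I_q}^{r} + O(\abs{I_q}^{r-1})$, so its contribution to $\mathbb{E}[S^k]$ has order of magnitude $\abs{I_q}^{r - k/2}$. Thus $r \leq k/2$ is necessary for a non-negligible contribution, with equality forcing a perfect matching ($m_j = 2$ for all $j$); such matchings exist only when $k$ is even, number $(k-1)!!$, and each contributes exactly $1$ to the limit. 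All other partitions contribute $O(\abs{I_q}^{-1})$, and the aggregate Deligne error is $O_k(\abs{I_q}^{k/2} q^{-1/2})$, which tends to $0$ under the hypothesis $\log\abs{I_q} = o(\log q)$ (equivalent to $\abs{I_q}^k = o(q)$ for every fixed $k$). This yields $\mathbb{E}[S^k] \to M_k$ and hence the theorem.

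The hard part is the monodromy input, specifically the independence assertion for additive translates, which underlies the factorisation of the main term and mimics Wick's theorem for Gaussian moments; once this is in hand, the rest is clean combinatorial bookkeeping. The probabilistic phenomenon is thus a central limit theorem emerging from the near-independence of translates, not from the law of a single Kloosterman value (which is semicircular, not Gaussian).
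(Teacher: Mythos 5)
Your argument is correct in its essentials, but note that the paper does not prove Theorem~\ref{theo_qual_CPG}: it is quoted from Perret-Gentil's article \cite{Pe-Ge1} (together with the quantitative Theorem~\ref{theo_quant_CPG}), and the abstract records that the inputs are Deligne's equidistribution theorem and Katz's monodromy computations --- precisely the path you take. Your moment expansion, the reduction to the correlation sums $\mathcal{C}_q(\mathbf{y})$, the appeal to the Goursat--Kolchin--Ribet criterion for product monodromy $\mathrm{SL}_2^r$ on direct sums of additive translates, and the Wick-type bookkeeping over set partitions together reconstitute the standard derivation. One presentational gloss: ``bounded summands give uniformly bounded moments, hence method of moments applies'' is better phrased as ``the Gaussian law is determined by its moments,'' since $S(\mathsf{Kl}_{2,\mathbb{F}_q},I_q;\ast)$ itself is not uniformly bounded in $q$; but this does not affect the correctness of the argument.

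The useful comparison is with the paper's proof of the analogous Theorem~\ref{theo_A}. The skeleton is the same --- multinomial expansion of the $k$-th moment and reduction to the shifted correlation sums $\mathsf{S}_{p^n}(\uple{\mu})$ --- but the key evaluation is obtained by a different mechanism: Proposition~\ref{propo_shifted_Kl} treats $\mathsf{S}_{p^n}(\uple{\mu})$ via the elementary explicit formula for $\mathsf{Kl}_{p^n}$ (available only for $n\ge 2$, by stationary phase / Hensel lifting) combined with Weil's Riemann hypothesis for curves over finite fields, following \cite{MR3854900}, rather than Deligne's equidistribution theorem for $\ell$-adic sheaves. Two further structural differences: the single-site law in the prime-power setting is $\tfrac12\delta_0+\mu_1$, a point mass at $0$ plus Sato--Tate, because $\mathsf{Kl}_{p^n}(a)$ vanishes identically when $a$ is a quadratic nonresidue modulo $p$, whereas your Sato--Tate moments are the full Catalan numbers; and the paper routes the moment asymptotics through the explicit i.i.d.\ model $S_H$ of \eqref{eq_SH} and the classical central limit theorem rather than extracting the Gaussian moments directly from the matching count. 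Your direct route is shorter for the purely qualitative statement; the paper's $S_H$ is worth the detour because the same model underlies the quantitative Theorem~\ref{theo_B}.
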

He also proved the following quantitative result.
\begin{theorem}[C.~Perret-Gentil (Quantitative result)]\label{theo_quant_CPG}
As $q$ and $\abs{I_q}$ tend to infinity with $\log{(\abs{I_q})}=o(\log{(q)})$ then
\begin{multline*}
\frac{\left\vert\left\{x\in\mathbb{F}_q, \alpha\leq S(\mathsf{Kl}_{2,\mathbb{F}_q},I_q;x)\leq\beta\right\}\right\vert}{q}=\frac{1}{\sqrt{2\pi}}\int_{\alpha}^\beta\exp{\left(\frac{-x^2}{2}\right)}\mathrm{d}x \\
+O_\epsilon\left((\beta-\alpha)\left(q^{-1/2+\epsilon}+\left(\frac{\log{(\abs{I_q})}}{\log{(q)}}\right)^{2/5}+\frac{1}{\sqrt{\abs{I_q}}}\right)\right)
\end{multline*}
for any real numbers $\alpha<\beta$ and for any $0<\epsilon<1/2$.
\end{theorem}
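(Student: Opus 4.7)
The plan is to upgrade the qualitative convergence of Theorem~\ref{theo_qual_CPG} to an effective quantitative estimate on the Kolmogorov distance by combining the method of moments with a Berry--Esseen type inversion formula. Since the limiting standard Gaussian has a smooth bounded density, matching sufficiently many moments with explicit error will imply the claimed comparison of distribution functions; the factor $(\beta-\alpha)$ on the right-hand side will emerge as the length-dependence of a polynomial (or truncated-Fourier) approximation to the indicator of $[\alpha,\beta]$.

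The first key step is to expand and evaluate the $k$-th moment of $S(\mathsf{Kl}_{2,\mathbb{F}_q},I_q;\ast)$. Setting $L\coloneqq\abs{I_q}$, this moment equals $L^{-k/2}$ times the average over $x\in\mathbb{F}_q$ of $\sum_{(h_1,\ldots,h_k)\in I_q^k}\prod_{j=1}^{k}\mathsf{Kl}_{2,\mathbb{F}_q}(x+h_j)$. The inner correlation sum is controlled by Katz' determination of the geometric monodromy group of the Kloosterman sheaf (namely $\mathrm{SL}_2$) together with Deligne's Riemann Hypothesis over finite fields: for tuples that group into $k/2$ distinct pairs of equal entries (which forces $k$ even) the inner average is $1+O(q^{-1/2})$, while for all other tuples it is $O_{k}(q^{-1/2})$ with an implied constant polynomial in $k$ coming from Katz-type bounds on the sum of Betti numbers. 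A careful combinatorial count of paired tuples, after normalization by $L^{k/2}$, reproduces the Gaussian moment $\mu_k=(k-1)!!$ up to an error of order $k^{O(1)}/\sqrt{L}$ due to pair collisions among the $h_j$'s.

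Next, I would feed these moment estimates into a quantitative method-of-moments converter: either Esseen's inequality applied to a truncated Fourier transform, or a direct polynomial approximation of the indicator of $[\alpha,\beta]$ tested against the Gaussian density. Working with all moments up to order $2K$, this produces a Kolmogorov-type bound of rough shape $(\beta-\alpha)\bigl(K^{O(K)}q^{-1/2+\epsilon}+k^{O(1)}/\sqrt{L}+\text{Gaussian tail in }K\bigr)$. Optimizing over $K$ balances the exponential-in-$K$ Deligne loss against the inverse-polynomial decay of the Gaussian moment tails; the balance point $K\asymp(\log(q)/\log(L))^{2/5}$ yields exactly the quoted exponent $2/5$, while the pair-collision contribution is what produces the $1/\sqrt{\abs{I_q}}$ term in the final estimate.

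The main obstacle is to make the Deligne-type estimates on the $k$-fold correlation sums uniform in both $k$ and in the shift configurations, with an \emph{explicit polynomial} dependence on $k$; this is precisely where the structural understanding of the Kloosterman sheaf via Katz' work, and more generally the formalism surveyed in \cite{MR3338119}, is indispensable. A secondary but delicate point is the combinatorial bookkeeping of near-paired tuples, needed to extract the exact $1/\sqrt{\abs{I_q}}$ correction rather than a weaker $o(1)$ bound, and to track how the polynomial-in-$k$ factors propagate through the final optimization of $K$ without spoiling the exponent $2/5$.
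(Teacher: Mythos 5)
This theorem is not proved in the paper: it is a result of C.~Perret-Gentil cited from \cite{Pe-Ge1} and stated for context only. The paper's own analogue is Theorem~\ref{theo_B}, proved in Section~\ref{sec_moments}, and the author explicitly indicates that the method there ``essentially'' follows the proof of Theorem~\ref{theo_quant_CPG}. Comparing against that pipeline, your overall skeleton is right: asymptotic evaluation of moments via Deligne--Katz, a quantitative moments-to-characteristic-functions-to-distribution inversion, and an optimization over the number of moments used (corresponding to Proposition~\ref{propo_moment}, Lemma~\ref{lemma_charac}, and Lemma~\ref{lemma_joint} in the paper).

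However, there is a genuine structural gap, and it produces a wrong attribution of one of the error terms. The paper does not compare the empirical moments directly with the Gaussian moments $(k-1)!!$. It instead introduces an intermediate finite model $S_H=(U_1+\dots+U_H)/\sqrt{H}$ of i.i.d.\ Sato--Tate-type variables, matches the empirical moments with $\mathbb{E}(S_H^k)$, runs the characteristic-function and distribution-function approximations \emph{against $S_H$}, and only at the very end compares $S_H$ to a Gaussian via an explicit Berry--Esseen theorem (Lemma~\ref{lemma_berry}). The term $(\beta-\alpha)/\sqrt{\abs{I_q}}$ comes precisely from this last Berry--Esseen step. Your proposal instead attributes $1/\sqrt{\abs{I_q}}$ to ``pair collisions among the $h_j$'s,'' but a direct computation of $\mathbb{E}(S_H^k)$ shows the collision terms contribute $O_k(1/\abs{I_q})$, a full power better, and also that they cancel at the pairing level with the combinatorics, so they cannot produce the $1/\sqrt{\abs{I_q}}$ factor. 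Without the intermediate $S_H$ and Berry--Esseen you would have to push the $O_k(1/\abs{I_q})$ collision error through the Esseen inversion, which entangles it with the polynomial-in-$k$ prefactors; you then neither recover the clean $(\beta-\alpha)/\sqrt{\abs{I_q}}$ term nor can you simply assert the exponent $2/5$ from the balance point $K\asymp(\log q/\log\abs{I_q})^{2/5}$ without the actual computation (for comparison, the analogous optimization in the paper's Theorem~\ref{theo_B} produces exponent $3/4$, not $2/5$, so the numerology is sensitive to the exact form of the inversion lemma used).
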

The main purpose of this work is to consider the case of Kloosterman sums of prime powers moduli, namely to replace finite fields by finite rings, and to give a probabilistic meaning to the histogram given in Figure \ref{fig_K2(p=61,n=2)}.
\par
The normalized Kloosterman sum of modulus $p^n$ is the real number given by
\begin{equation*}
\mathsf{Kl}_{p^n}(a)\coloneqq\frac{1}{p^{n/2}}S\left(a,1;p^n\right)=\frac{1}{p^{n/2}}\sum_{\substack{1\leq x\leq p^n \\
p\nmid x}}\
e\left(\frac{ax+\overline{x}}{p^n}\right)
\end{equation*}
for any integer $a$ and where as usual $\overline{x}$ stands for the inverse of $x$ modulo $p^n$.
\par
For any subset $I_{p^n}$ of $\left(\Z/p^n\Z\right)^\times$, let
\begin{equation*}
S\left(\mathsf{Kl}_{p^n},I_{p^n}\right)\coloneqq\frac{1}{\sqrt{\abs{I_{p^n}}}}\sum_{x\in I_{p^n}}\mathsf{Kl}_{p^n}(x)
\end{equation*}
be the normalized partial sum over $I_{p^n}$.
\par
Given a sequence of sets $I_{p^n}$ of $\Z/p^n\Z$, we are interested in the distribution of the sequence of real random variables over $\left(\Z/p^n\Z\right)^\times$ endowed with the uniform measure given by
\begin{equation*}
\forall x\in\left(\Z/p^n\Z\right)^\times,\quad S\left(\mathsf{Kl}_{p^n},I_{p^n};x\right)\coloneqq S\left(\mathsf{Kl}_{p^n},I_{p^n}+x\right).
\end{equation*}
\begin{Fig}
\begin{center}
\includegraphics[scale=0.6,angle=0]{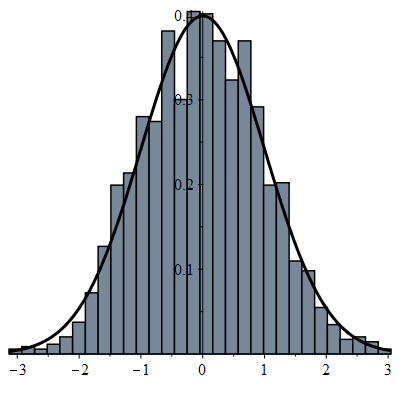}
\end{center}
\caption{Distribution of $S\left(\mathsf{Kl}_{41^2},I_{41^2};\ast\right)$, namely $p=41$ and $n=2$, for a set $I_{41^2}$ of cardinality $29$. In bold, the density function of a standard Gaussian random variable.}
\label{fig_K2(p=61,n=2)}
\end{Fig}
\strut\newline Let us state the qualitative result of this work.
\begin{theoint}[Qualitative result]\label{theo_A}
Let $n\geq 2$ be a fixed integer. Assume that
\begin{equation}\label{eq_cond_fundaa}
\forall (x,y)\in I_{p^n}\times I_{p^n},\quad x\neq y\Rightarrow p\nmid x-y. 
\end{equation}
for any prime number $p$. If $p$ and $\abs{I_{p^n}}$ tend to infinity with 
\begin{equation}\label{eq_cond_funda_2}
\log{\left(\abs{I_{p^n}}\right)}=o\left(\log{(p)}\right)
\end{equation}
then the sequence of real-valued random variables $S\left(\mathsf{Kl}_{p^n},I_{p^n};\ast\right)$ converges in law to a standard Gaussian real-valued random variable.
\end{theoint}
\begin{remark}
This theorem is the analogue of Theorem \ref{theo_qual_CPG}. The condition \eqref{eq_cond_fundaa} is new and comes from the context of finite rings in this work instead of finite fields in \cite{Pe-Ge1} whereas the condition \eqref{eq_cond_funda_2} is exactly the same and is inherent to the method of proof itself namely the method of moments. Note that the condition \eqref{eq_cond_fundaa} requires that $\abs{I_{p^n}}<p$ holds, which is automatically satisfied by \eqref{eq_cond_funda_2}.
\end{remark}
\newpage
Let us state the quantitative result of this work.
\begin{theoint}[Quantitative result]\label{theo_B}
Let $n\geq 2$ be a fixed integer and
\begin{equation*}
\beta_n\coloneqq\begin{cases}
1/2 & \text{if $\;\;2\leq n\leq 5$,} \\
\frac{4(n-1)}{2^n} & \text{otherwise}.
\end{cases}
\end{equation*}
Assume that
\begin{equation}\label{eq_cond_funda_1}
\forall (x,y)\in I_{p^n}\times I_{p^n},\quad x\neq y\Rightarrow p\nmid x-y. 
\end{equation}
for any prime number $p$. If $p$ and $\abs{I_{p^n}}$ tend to infinity with 
\begin{equation}\label{eq_cond_funda_22}
\log{\left(\abs{I_{p^n}}\right)}=o\left(\log{(p)}\right)
\end{equation}
then
\begin{multline*}
\frac{\left\vert\left\{x\in\left(\Z/p^n\Z\right)^\times, \alpha\leq S\left(\mathsf{Kl}_{p^n},I_{p^n};x\right)\leq\beta\right\}\right\vert}{\varphi\left(p^n\right)}=\frac{1}{\sqrt{2\pi}}\int_{\alpha}^\beta\exp{\left(\frac{-x^2}{2}\right)}\mathrm{d}x \\
+O_\epsilon\left(\max{\left(\frac{1}{\abs{I_{p^n}}},\left(\frac{\log{\left(\abs{I_{p^n}}\right)}}{\log{(p)}}\right)^{3/4}\right)}+p^{-\beta_n+3\epsilon}+\frac{\beta-\alpha}{\sqrt{\abs{I_{p^n}}}}\right)
\end{multline*}
for any real numbers $\alpha<\beta$ and for any $0<\epsilon<\beta_n/3$.
\end{theoint}
\begin{remark}
Once again, this theorem is the perfect analogue of Theorem \ref{theo_quant_CPG}.
\end{remark}
\noindent{\textbf{Organization of the paper. }}%
The main tool involved in Theorem \ref{theo_A} is recalled in Subsection \ref{sec_A}. The technical results required in Theorem \ref{theo_B} are stated in Subsection \ref{sec_B}. Theorem \ref{theo_A} is proved in Section \ref{sec_process}. The proof of Theorem \ref{theo_B} is given in Section \ref{sec_moments}.
%........................................................................................
\begin{notations}
The main parameter in this paper is an odd prime number $p$, which tends to infinity. Thus, if $f$ and $g$ are some $\C$-valued function of the real variable then the notations $f(p)=O_A(g(p))$ or $f(p)\ll_A g(p)$ mean that $\abs{f(p)}$ is smaller than a "constant", which only depends on $A$, times $g(p)$ at least for $p$ large enough.
\par
$n\geq 2$ is a fixed integer.
\par
For any real number $x$ and integer $k$, $e_k(x)\coloneqq\exp{\left(\frac{2i\pi x}{k}\right)}$.
\par
For any finite set $S$, $\vert S\vert$ stands for its cardinality. 
\par
We will denote by $\epsilon$ an absolute positive constant whose definition may change from one line to the next one.
\par
The notation $\sideset{}{^\times}\sum$ means that the summation is over a set of integers coprime with $p$.
\par
Finally, if $\mathcal{P}$ is a property then $\delta_{\mathcal{P}}$ is the Kronecker symbol, namely $1$ if $\mathcal{P}$ is satisfied and $0$ otherwise.
\end{notations}
%................................................................................
\begin{merci}%
The main structure of this paper was worked out while the author was visiting the Republic of Cameroon in December 2016 and January 2017. He would like to heartily thank all the wonderful people he met during his journey.
\par
The author is financed by the ANR Project Flair ANR-17-CE40-0012.
\par
Last but not least, the author would like to thank the anonymous referee and Corentin~Perret-Gentil for their relevant comments.
\end{merci}
%............................................................................................
\section{The main ingredients}\label{sec_path}%
%..............................................................................................
\subsection{Moments of products of additively shifted Kloosterman sums}\label{sec_A}%
The crucial ingredient in the proof of Theorem \ref{theo_A} is the asymptotic evaluation of the complete sums of products of shifted Kloosterman sums $\mathsf{S}_{p^n}(\uple{\mu})$ defined by
\begin{equation}\label{eq_Spmu}
\mathsf{S}_{p^n}(\uple{\mu})\coloneqq\frac{1}{\varphi(p^n)}\sum_{a\in\left(\mathbb{Z}/p^n\mathbb{Z}\right)^\times}\prod_{\tau\in \mathbb{Z}/p^n\mathbb{Z}}\mathsf{Kl}_{p^n}(a+\tau)^{\mu(\tau)}
\end{equation}
for $\uple{\mu}=\left(\mu(\tau)\right)_{\tau\in\mathbb{Z}/p^n\mathbb{Z}}$ a sequence of $p^n$-tuples of non-negative integers different from the $0$-tuple.
\par
Let us define for such sequence $\uple{\mu}$,
\begin{eqnarray*}
\mathsf{T}(\uple{\mu}) & \coloneqq & \left\{\tau\in\Z/p^n\Z, \mu(\tau)\geq 1\right\}\subset\Z/p^n\Z, \\
\overline{\mathsf{T}}(\uple{\mu}) & \coloneqq & \left\{\tau\bmod{p}, \tau\in\mathsf{T}(\uple{\mu})\right\}\subset\Z/p\Z
\end{eqnarray*}
and
\begin{equation}\label{eq_apmu}
\mathsf{A}_{p^n}(\uple{\mu})\coloneqq\left\{a\in\left(\Z/p^n\Z\right)^\times, \forall\tau\in\mathsf{T}(\uple{\mu}), a+\tau\in\left(\left(\Z/p^n\Z\right)^\times\right)^2\right\}.
\end{equation}
\par
The following proposition, which contains an asymptotic formula for the sums $\mathsf{S}_{p^n}(\uple{\mu})$, is an improvement of \cite[Proposition 4.10]{MR3854900} in the sense that the dependency in the tuple $\uple{\mu}$ in the error term has been made explicit.
\begin{proposition}\label{propo_shifted_Kl}
Let $\uple{\mu}=\left(\mu(\tau)\right)_{\tau\in\mathbb{Z}/p^n\mathbb{Z}}$ be a sequence of $p^n$-tuples of non-negative integers satisfying
\begin{equation}\label{eq_assump_mu}
\sum_{\tau\in\Z/p^n\Z}\mu(\tau)\leq M
\end{equation}
for some absolute positive constant $M$. If 
\begin{equation}\label{eq_assump_p}
p>\max{\left(M,2n-5\right)}
\end{equation}
then
\begin{multline}\label{eq_asymp_S}
\mathsf{S}_{p^n}(\uple{\mu})=\left[\prod_{\tau\in \mathbb{Z}/p^n\mathbb{Z}}\delta_{2\mid\mu(\tau)}\binom{\mu(\tau)}{\mu(\tau)/2}\right]\frac{\left\vert\mathsf{A}_{p^n}(\uple{\mu})\right\vert}{\varphi(p^n)} \\
+O_{\epsilon}\left(2^{\sum_{\tau\in\mathsf{T}(\uple{\mu})}\mu(\tau)}\left(p^{-\frac{4(n-1)}{2^n}+\epsilon}+\frac{\vert\mathsf{T}(\uple{\mu})\vert\times 2^{\vert\mathsf{T}(\uple{\mu})\vert}}{p}\right)\right)
\end{multline}
for any $\epsilon>0$ and where the implied constant only depends on $\epsilon$.
\end{proposition}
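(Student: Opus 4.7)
The starting point is the stationary phase evaluation of Kloosterman sums at prime power moduli, valid for $n\geq 2$ and $p$ odd: $\mathsf{Kl}_{p^n}(b)$ vanishes unless $b$ is a square in $(\Z/p^n\Z)^\times$, and whenever $b\equiv y^2\pmod{p^n}$ with $y\in(\Z/p^n\Z)^\times$,
\begin{equation*}
\mathsf{Kl}_{p^n}(b)=e_{p^n}(2y)+e_{p^n}(-2y).
\end{equation*}
Inserted into \eqref{eq_Spmu}, this identity immediately confines the summation to $a\in\mathsf{A}_{p^n}(\uple{\mu})$ and, after choosing a family $(y_\tau(a))_{\tau\in\mathsf{T}(\uple{\mu})}$ of square roots of the shifted arguments $a+\tau$, rewrites the product of Kloosterman sums as $\prod_\tau\bigl(2\cos(4\pi y_\tau(a)/p^n)\bigr)^{\mu(\tau)}$. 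The plan is then to open each factor by the binomial theorem, which expresses $\mathsf{S}_{p^n}(\uple{\mu})$ as a weighted sum of $2^{\sum_\tau\mu(\tau)}$ incomplete exponential sums
\begin{equation*}
\Sigma(\uple{c})\coloneqq\frac{1}{\varphi(p^n)}\sum_{a\in\mathsf{A}_{p^n}(\uple{\mu})}e_{p^n}\!\Bigl(\sum_{\tau\in\mathsf{T}(\uple{\mu})}c_\tau\,y_\tau(a)\Bigr)
\end{equation*}
indexed by tuples $\uple{c}=(c_\tau)_\tau$ with $c_\tau=2k_\tau-\mu(\tau)$, $0\leq k_\tau\leq\mu(\tau)$, each weighted by $\prod_\tau\binom{\mu(\tau)}{k_\tau}$.

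The diagonal tuple $\uple{c}=\uple{0}$ exists only when every $\mu(\tau)$ is even, comes with weight $\prod_\tau\binom{\mu(\tau)}{\mu(\tau)/2}$, and reproduces exactly the main term $\prod_\tau\delta_{2\mid\mu(\tau)}\binom{\mu(\tau)}{\mu(\tau)/2}\abs{\mathsf{A}_{p^n}(\uple{\mu})}/\varphi(p^n)$ of \eqref{eq_asymp_S}. For every off-diagonal tuple $\uple{c}\neq\uple{0}$ one has to exhibit cancellation in $\Sigma(\uple{c})$. Provided the reductions $\overline{\tau}\in\Z/p\Z$ of the elements of $\mathsf{T}(\uple{\mu})$ are pairwise distinct, the map $a\mapsto(y_\tau(a))_\tau$ identifies $\mathsf{A}_{p^n}(\uple{\mu})$ with the $\Z/p^n\Z$-points of the simultaneous-square variety cut out by the equations $y_\tau^2-y_{\tau_0}^2=\tau-\tau_0$, whose reduction modulo $p$ is a smooth affine curve with geometric invariants bounded in terms of $\abs{\mathsf{T}(\uple{\mu})}$ alone. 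Square-root cancellation for $\Sigma(\uple{c})$ on this curve via the Weil--Deligne bound, combined with the iterative $p$-adic lift tracking the $n-1$ higher digits of $a$ developed in \cite[Proposition 4.10]{MR3854900}, delivers the saving $p^{-4(n-1)/2^n+\epsilon}$. The complementary degenerate regime, in which some pair $\tau\neq\tau'$ of elements of $\mathsf{T}(\uple{\mu})$ collides modulo $p$, is discarded trivially and accounts for the residual error $\abs{\mathsf{T}(\uple{\mu})}\,2^{\abs{\mathsf{T}(\uple{\mu})}}/p$. Multiplying by the total number $2^{\sum_\tau\mu(\tau)}$ of off-diagonal frequencies then produces \eqref{eq_asymp_S}.

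The hardest point, and the only novelty with respect to \cite[Proposition 4.10]{MR3854900}, lies in uniformity. The simultaneous-square curve and its geometric invariants depend only on $\mathsf{T}(\uple{\mu})$, whereas the frequencies $c_\tau$ and the number of binomial terms grow with the full vector $\uple{\mu}$. One must therefore revisit every step of the $p$-adic stationary phase analysis and verify that no further $\mu(\tau)$-dependence creeps into the implicit constants beyond the binomial weights, so that the sole $\uple{\mu}$-dependent factor in the final error is the cumulative binomial mass $2^{\sum_\tau\mu(\tau)}$ recorded in \eqref{eq_asymp_S}. The condition \eqref{eq_assump_p} on $p$ is what keeps both $M$ and $2n-5$ smaller than $p$ throughout, ensuring that no algebraic degeneracy in characteristic $p$ spoils the geometric estimates.
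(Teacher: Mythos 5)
Your architecture --- stationary-phase evaluation of $\mathsf{Kl}_{p^n}$, binomial expansion of the cosine product, a diagonal term producing $\prod_\tau\delta_{2\mid\mu(\tau)}\binom{\mu(\tau)}{\mu(\tau)/2}\,\abs{\mathsf{A}_{p^n}(\uple{\mu})}/\varphi(p^n)$, and the remaining $2^{\sum_\tau\mu(\tau)}$ of binomial mass spread over off-diagonal incomplete exponential sums --- matches the paper's, which is imported essentially verbatim from \cite[Proposition 4.10]{MR3854900}. You also correctly single out uniformity in $\uple{\mu}$ as the sole novelty of the statement over \cite{MR3854900}.

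However, there is a genuine gap, and it is exactly at the step you identify as ``the hardest point'' and then do not carry out. Your explanation of the term $\vert\mathsf{T}(\uple{\mu})\vert\, 2^{\vert\mathsf{T}(\uple{\mu})\vert}/p$ is wrong: it does not come from ``discarding a degenerate regime in which some pair $\tau\neq\tau'$ collides modulo $p$'' --- such collisions are ruled out by the hypothesis $\vert\mathsf{T}(\uple{\mu})\vert=\vert\overline{\mathsf{T}}(\uple{\mu})\vert$ that feeds into Lemma~\ref{propo_tricky_counting}. That term instead bounds the counting function $\mathsf{N}(\uple{\mu},\uple{\ell};w)$ defined in \eqref{eq_tricky_counting}: the number of tuples $\uple{b}\in\mathsf{B}_{p^n}(\uple{\mu})$ of square roots for which the derivative sums $m_{\uple{b},\uple{\ell}}(j,j)$ fall into prescribed residue classes modulo $p$, i.e.\ the degenerate critical-point configurations of the $p$-adic stationary-phase iteration where the exponential sum over the higher digits of $a$ trivializes. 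The concrete technical content of the proposition is precisely Lemma~\ref{propo_tricky_counting}, namely the uniform bound $\mathsf{N}(\uple{\mu},\uple{\ell};w)\ll\vert\mathsf{T}(\uple{\mu})\vert\,2^{\vert\mathsf{T}(\uple{\mu})\vert}$, obtained by tracking the degrees of the resultant polynomials $\psi(R_{\uple{\ell}}(\uple{Y};w))$ and $\psi(S_{\uple{\ell}}(\uple{Y}))$ from \cite{MR3854900} (at most $k2^{k-1}$ and $(k-1)2^{k-2}$ for $k=\vert\mathsf{T}(\uple{\mu})\vert$), so they have at most that many roots in $\mathbb{F}_p$. ``Revisit every step and verify no further $\mu(\tau)$-dependence creeps in'' is a description of the task, not a proof; without this degree bound the error term in \eqref{eq_asymp_S} does not follow, and that is the part that would need to be supplied.
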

The dependency in the tuple $\uple{\mu}$ in \cite[Proposition 4.7]{MR3854900} also has to be made explicit. Let us recall some additional notations, which coincide exactly with the notations used in \cite{MR3854900} and whose motivations can be found in this reference. Let $\mathsf{B}_{p^n}(\uple{\mu})$ be the subset of the $\abs{\mathsf{T}(\uple{\mu})}$-tuples $\uple{b}=\left(b_\tau\right)_{\tau\in\mathsf{T}(\uple{\mu})}$ of integers in $\{1,\dots,(p-1)/2\}$ satisfying
\begin{equation}\label{eq_asump_1}
\forall(\tau,\tau')\in\mathsf{T}(\uple{\mu})^2,\quad b_\tau^2-\tau\equiv b_{\tau'}^2-\tau'\bmod{p}
\end{equation}
and
\begin{equation}\label{eq_deuxieme}
\forall\tau\in\mathsf{T}(\uple{\mu}),\quad p\nmid b_\tau^2-\tau.
\end{equation}
Let $\uple{\ell}=\left(\ell_\tau\right)_{\tau\in\mathsf{T}(\uple{\mu})}$ be a $\left\vert\mathsf{T}(\uple{\mu})\right\vert$-tuple of integers. For any integer $j$ in $\{1,\dots,n-1\}$, let us define
\begin{equation}\label{eq_mmm}
m_{\uple{b},\uple{\ell}}(j,j)=\sum_{\tau\in\mathsf{T}(\uple{\mu})}\ell_\tau\overline{b_\tau}^{2j-1}
\end{equation}
and the following associated object
\begin{equation}\label{eq_tricky_counting}
\mathsf{N}(\uple{\mu},\uple{\ell};w)\coloneqq\sum_{\substack{\uple{b}\in\mathsf{B}_{p^n}(\uple{\mu}) \\
m_{\uple{b},\uple{\ell}}(1,1)\equiv w\bmod{p} \\
\forall j\in\{2,\dots,n-1\},\quad m_{\uple{b},\uple{\ell}}(j,j)\equiv 0\bmod{p}}}1
\end{equation}
for any $w$ modulo $p$.
\begin{lemma}\label{propo_tricky_counting}
Let $\uple{\mu}=\left(\mu(\tau)\right)_{\tau\in\mathbb{Z}/p^n\mathbb{Z}}$ be a sequence of $p^n$-tuples of non-negative integers satisfying
$\abs{\mathsf{T}(\uple{\mu})}=\abs{\overline{\mathsf{T}}(\uple{\mu})}$ and $\uple{\ell}$ be a $\abs{\mathsf{T}(\uple{\mu})}$-tuple of integers satisfying
\begin{equation*}
\forall\tau\in\mathsf{T}(\uple{\mu}),\quad\abs{\ell_\tau}<p
\end{equation*}
and $\uple{\ell}\neq\uple{0}$. One uniformly has
\begin{equation*}
\mathsf{N}(\uple{\mu},\uple{\ell};w)\ll\vert\mathsf{T}(\uple{\mu})\vert\times 2^{\vert\mathsf{T}(\uple{\mu})\vert}
\end{equation*}
for any $w\bmod{p}$ where the implied constant is absolute.
\end{lemma}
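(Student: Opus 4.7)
The plan is to retain only the single constraint indexed by $j=1$ in \eqref{eq_tricky_counting}, namely $m_{\uple{b},\uple{\ell}}(1,1)\equiv w\pmod p$, and to parametrize admissible $\uple{b}\in\mathsf{B}_{p^n}(\uple{\mu})$ by the common value $c\equiv b_\tau^2-\tau\pmod p$. By \eqref{eq_asump_1} and \eqref{eq_deuxieme}, $c$ belongs to $(\Z/p\Z)^\times$; the hypothesis $\abs{\mathsf{T}(\uple{\mu})}=\abs{\overline{\mathsf{T}}(\uple{\mu})}$ ensures that the values $c+\tau$, for $\tau\in\mathsf{T}(\uple{\mu})$, are pairwise distinct nonzero squares modulo $p$, with $b_\tau\in\{1,\dots,(p-1)/2\}$ the canonical square root. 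Using $\overline{b_\tau}=b_\tau/(c+\tau)$ the retained constraint reads
\[
\sum_{\tau\in\mathsf{T}(\uple{\mu})}\ell_\tau\,b_\tau^{-1}\equiv w\pmod p,
\]
so it suffices to bound by $\abs{\mathsf{T}(\uple{\mu})}\cdot 2^{\abs{\mathsf{T}(\uple{\mu})}}$ the number of $c\in(\Z/p\Z)^\times$ solving this single equation.

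I would pass to the Kummer extension $K:=\mathbb{F}_p(c)\bigl(\sqrt{c+\tau}:\tau\in\mathsf{T}(\uple{\mu})\bigr)$, of degree $2^{\abs{\mathsf{T}(\uple{\mu})}}$ over $\mathbb{F}_p(c)$, with Galois group generated by the sign flips $\sqrt{c+\tau}\mapsto-\sqrt{c+\tau}$. For each sign pattern $\uple{\epsilon}=(\epsilon_\tau)\in\{\pm 1\}^{\mathsf{T}(\uple{\mu})}$ set
\[
F_{\uple{\epsilon}}(c)\;:=\;\sum_{\tau\in\mathsf{T}(\uple{\mu})}\epsilon_\tau\,\ell_\tau\prod_{\tau'\neq\tau}\sqrt{c+\tau'}\;-\;w\prod_{\tau\in\mathsf{T}(\uple{\mu})}\sqrt{c+\tau}\;\in\;K,
\]
which factors as $\bigl(\prod_\tau\sqrt{c+\tau}\bigr)\bigl(\sum_\tau\epsilon_\tau\ell_\tau/\sqrt{c+\tau}-w\bigr)$ and thus encodes the retained equation on the $\uple{\epsilon}$-sheet. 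The product $P(c):=\prod_{\uple{\epsilon}}F_{\uple{\epsilon}}(c)$ is Galois-invariant, hence lies in $\mathbb{F}_p(c)$, and since each $F_{\uple{\epsilon}}$ is integral over $\mathbb{F}_p[c]$ one actually has $P\in\mathbb{F}_p[c]$. The hypothesis $\uple{\ell}\neq\uple 0$, combined with the $\mathbb{F}_p(c)$-linear independence of the Kummer monomials $\prod_{\tau\in S}\sqrt{c+\tau}$ ($S\subset\mathsf{T}(\uple{\mu})$), ensures that no factor $F_{\uple{\epsilon}}$ vanishes identically, so $P\not\equiv 0$.

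The main step is then to control $\deg P$ via its leading behaviour at infinity. Writing $\sqrt{c+\tau}\sim c^{1/2}$ as $c\to\infty$, each $F_{\uple{\epsilon}}(c)$ is asymptotic to $-w\,c^{\abs{\mathsf{T}(\uple{\mu})}/2}$ when $w\neq 0$, and to $\bigl(\sum_\tau\epsilon_\tau\ell_\tau\bigr)c^{(\abs{\mathsf{T}(\uple{\mu})}-1)/2}$ when $w=0$; multiplying over the $2^{\abs{\mathsf{T}(\uple{\mu})}}$ sign patterns then gives $\deg P\leq\abs{\mathsf{T}(\uple{\mu})}\cdot 2^{\abs{\mathsf{T}(\uple{\mu})}-1}$. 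Any admissible $c$ makes the factor $F_{\uple{\epsilon}}$ attached to the signs of the canonical square roots at $c$ vanish, so it is a root of $P$, and therefore $\mathsf{N}(\uple{\mu},\uple{\ell};w)\leq\deg P\leq\abs{\mathsf{T}(\uple{\mu})}\cdot 2^{\abs{\mathsf{T}(\uple{\mu})}}$. The trickiest points to get right will be the verification that $P\in\mathbb{F}_p[c]$ is not identically zero, together with the degree computation in the case $w=0$ where sums $\sum_\tau\epsilon_\tau\ell_\tau$ may vanish for some $\uple{\epsilon}$ --- such accidental cancellations only lower $\deg P$ and therefore reinforce, rather than spoil, the bound.
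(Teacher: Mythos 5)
Your proposal is correct and is essentially the same argument as the paper's: the paper's proof simply defers to \cite[Proposition~4.7]{MR3854900} and records the degrees of the auxiliary polynomials, and the norm polynomial $P=\prod_{\uple{\epsilon}}F_{\uple{\epsilon}}$ you build over the Kummer extension $\mathbb{F}_p(c)(\sqrt{c+\tau}:\tau\in\mathsf{T}(\uple{\mu}))$ is precisely the polynomial $\psi(R_{\uple{\ell}}(\uple{Y};w))$ (respectively $\psi(S_{\uple{\ell}}(\uple{Y}))$ when $p\mid w$) from that reference, obtained by taking the product over all $2^{k}$ conjugates of the single constraint $m_{\uple{b},\uple{\ell}}(1,1)\equiv w$. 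Your verification that $P$ is Galois-invariant, integral, and nonzero (using $\uple{\ell}\neq\uple{0}$ together with $\abs{\ell_\tau}<p$ to rule out $\ell_\tau\equiv 0\bmod p$ for all $\tau$, and the $\mathbb{F}_p(c)$-linear independence of the Kummer monomials) is sound, as is the degree bound $\deg P\leq k2^{k-1}$ in the case $p\nmid w$. The only small discrepancy is in the case $w\equiv 0\bmod p$: your bound $\deg P\leq(k-1)2^{k-1}$ is off by a factor of $2$ from the paper's $(k-1)2^{k-2}$, because you do not exploit the symmetry $F_{-\uple{\epsilon}}=-F_{\uple{\epsilon}}$ (which holds when $w=0$ and shows $P$ is, up to sign, the square of the product over sign patterns with $\epsilon_{\tau_0}=1$); this is harmless since your cruder bound still yields $\mathsf{N}(\uple{\mu},\uple{\ell};w)\ll k2^{k}$ with an absolute implied constant.
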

\begin{proof}[\proofname{} of lemma \ref{propo_tricky_counting}]%
Let us briefly indicate the required changes in the proof of \cite[Proposition 4.7]{MR3854900}. Let $k\coloneqq\vert\mathsf{T}(\uple{\mu})\vert$ for simplicity. On the one hand, if $(p,w)=1$ then the polynomial $\psi(R_{\uple{\ell}}(\uple{Y};w))$ in $\mathbb{F}_p[Z]$ defined in \cite[Page 15]{MR3854900} is of degree exactly $k2^{k-1}$ and admits at most $k2^{k-1}$ roots. On the other hand, if $w\equiv 0\pmod{p}$ then the non-zero polynomial $\psi(S_{\uple{\ell}}(\uple{Y}))$ in $\mathbb{F}_p[Z]$ defined in \cite[Page 507]{MR3854900} is of degree at most $(k-1)2^{k-2}$ and admits at most $(k-1)2^{k-2}$ roots.
\end{proof}
Let us give the proof of Proposition \ref{propo_shifted_Kl}.
\begin{proof}[\proofname{} of proposition \ref{propo_shifted_Kl}]%
By \cite[Page 511]{MR3854900}, the error term to bound is given by
\begin{multline*}
\mathsf{Err}_{p^n}(\uple{\mu})\coloneqq\frac{1}{\varphi(p^n)}\sum_{\uple{b}\in\mathsf{B}_{p^n}(\uple{\mu})}\prod_{\tau\in\mathsf{T}(\uple{\mu})}\left(\frac{b_\tau}{p^n}\right)^{\mu(\tau)} \\
\sum_{\substack{a\in\Z/p^n\Z \\
\forall\tau\in\mathsf{T}(\uple{\mu}), a\equiv b_\tau^2-\tau\bmod{p}}}\prod_{\tau\in\mathsf{T}(\uple{\mu})}\;\;\sideset{}{^\circ}\sum_{0\leq u_\tau\leq\mu(\tau)}\binom{\mu(\tau)}{u_\tau}\cos{\left[\left(\mu(\tau)-2u_\tau\right)\left(\frac{4\pi s_{a+\tau,p^n}}{p^n}+\theta_{p^n}\right)\right]}
\end{multline*}
where $\sideset{}{^\circ}\sum$  means that the summation is over the $u_\tau $'s satisfying
\begin{equation*}
\exists\tau_0\in\mathsf{T}(\uple{\mu}),\quad\mu(\tau_0)-2u_{\tau_0}\neq 0.
\end{equation*}
In the previous equation $s_{a+\tau,p^n}$ stands for any square-root modulo $p^n$ of $a+\tau$ for any relevant $a$ and $\tau$.
\par
Obviously,
\begin{multline*}
\mathsf{Err}_{p^n}(\uple{\mu})=\frac{1}{\varphi(p^n)}\sum_{\uple{b}\in\mathsf{B}_{p^n}(\uple{\mu})}\prod_{\tau\in\mathsf{T}(\uple{\mu})}\left(\frac{b_\tau}{p^n}\right)^{\mu(\tau)}\;\;\sideset{}{^\circ}\sum_{\substack{\uple{u}=\left(u_\tau\right)_{\tau\in\mathsf{T}(\uple{\mu})} \\
\forall\tau\in\mathsf{T}(\uple{\mu}), 0\leq u_\tau\leq\mu(\tau)}}\prod_{\tau\in\mathsf{T}(\uple{\mu})}\binom{\mu(\tau)}{u_\tau} \\
\sum_{\substack{a\in\Z/p^n\Z \\
\forall\tau\in\mathsf{T}(\uple{\mu}), a\equiv b_\tau^2-\tau\bmod{p}}}\prod_{\tau\in\mathsf{T}(\uple{\mu})}\cos{\left[\left(\mu(\tau)-2u_\tau\right)\left(\frac{4\pi s_{a+\tau,p^n}}{p^n}+\theta_{p^n}\right)\right]}.
\end{multline*}
By Euler's formula,
\begin{multline}\label{eq_estim_fin_1}
\left\vert\mathsf{Err}_{p^n}(\uple{\mu})\right\vert\leq\;\;\sideset{}{^\circ}\sum_{\substack{\uple{u}=\left(u_\tau\right)_{\tau\in\mathsf{T}(\uple{\mu})} \\
\forall\tau\in\mathsf{T}(\uple{\mu}), 0\leq u_\tau\leq\mu(\tau)}}\prod_{\tau\in\mathsf{T}(\uple{\mu})}\binom{\mu(\tau)}{u_\tau}\frac{1}{2^{\vert\mathsf{T}(\uple{\mu})\vert}}\sum_{J\subset\mathsf{T}(\uple{\mu})}\frac{1}{\varphi(p^n)}\sum_{\uple{b}\in\mathsf{B}_{p^n}(\uple{\mu})} \\
\left\vert\sum_{\substack{a\in\Z/p^n\Z \\
\forall\tau\in\mathsf{T}(\uple{\mu}), a\equiv b_\tau^2-\tau\bmod{p}}}e_{p^n}\left(\sum_{\tau\in J}\left(\mu(\tau)-2u_\tau\right)s_{a+\tau,p^n}-\sum_{\tau\in J^c}\left(\mu(\tau)-2u_\tau\right)s_{a+\tau,p^n}\right)\right\vert.
\end{multline}
\par
Let us define
\begin{equation*}
\mathsf{Err}_{p^n}(\uple{\mu},\uple{\ell})\coloneqq\frac{1}{\varphi(p^n)}\sum_{\uple{b}\in\mathsf{B}_{p^n}(\uple{\mu})}\left\vert\sum_{\substack{a\in\Z/p^n\Z \\
\forall\tau\in\mathsf{T}(\uple{\mu}), a\equiv b_\tau^2-\tau\bmod{p}}}e_{p^n}\left(\sum_{\tau\in\mathsf{T}(\uple{\mu})}\ell_\tau s_{a+\tau,p^n}\right)\right\vert
\end{equation*}
for any $\abs{\mathsf{T}(\uple{\mu})}$-tuple $\uple{\ell}$ of integers satisfying
\begin{equation*}
\uple{\ell}\in\prod_{\tau\in\mathsf{T}(\uple{\mu})}[-\mu(\tau),\mu(\tau)]\quad\text{ and }\quad \uple{\ell}\neq\uple{0}.
\end{equation*}
By \cite[Equation (4.37)]{MR3854900},
\begin{multline*}
\mathsf{Err}_{p^n}(\uple{\mu},\uple{\ell})\ll_{\epsilon}p^{-\frac{4(n-1)}{2^n}+\epsilon}+\frac{\mathsf{N}(\uple{\mu},\uple{\ell};0)}{p}+\sum_{k=1}^{n-1}\frac{1}{p^k}\sum_{\substack{v\bmod{p^{n-k}} \\
(p,v)=1}}\frac{1}{\abs{v}}\mathsf{N}\left(\uple{\mu},\uple{\ell};\overline{c_1^\prime}vp^{k-1}\right)
\end{multline*}
for any $\epsilon>0$ and for some integer $c_1^\prime$ coprime with $p$ defined in \cite[Lemma 4.6]{MR3854900}, $\overline{c_1^\prime}$ being its inverse modulo $p$.
\par
By Lemma \ref{propo_tricky_counting}, one gets
\begin{equation}\label{eq_estim_fin_2}
\mathsf{Err}_{p^n}(\uple{\mu},\uple{\ell})\ll_{\epsilon}p^{-\frac{4(n-1)}{2^n}+\epsilon}+\frac{\vert\mathsf{T}(\uple{\mu})\vert\times 2^{\vert\mathsf{T}(\uple{\mu})\vert}}{p}
\end{equation}
for any $\epsilon>0$.
\par
By \eqref{eq_estim_fin_1} and \eqref{eq_estim_fin_2},
\begin{equation*}
\mathsf{Err}_{p^n}(\uple{\mu})\ll_\epsilon 2^{\sum_{\tau\in\mathsf{T}(\uple{\mu})}\mu(\tau)}\left(p^{-\frac{4(n-1)}{2^n}+\epsilon}+\frac{\vert\mathsf{T}(\uple{\mu})\vert\times 2^{\vert\mathsf{T}(\uple{\mu})\vert}}{p}\right)
\end{equation*}
for any $\epsilon>0$.
\end{proof}
The following proposition, which heavily relies on A.~Weil's proof of the Riemann hypothesis for curves over finite fields and is \cite[Proposition 4.8]{MR3854900}, states an asymptotic formula for the cardinality of the sets $\mathsf{A}_{p^n}(\uple{\mu})$.
\begin{proposition}[G.~Ricotta-E.~Royer]\label{propo_cardinality}
Let $\uple{\mu}=\left(\mu(\tau)\right)_{\tau\in\mathbb{Z}/p^n\mathbb{Z}}$ be a sequence of $p^n$-tuples of non-negative integers. If $p$ is odd then
\begin{equation}\label{eq_cardinality}
\left\vert\mathsf{A}_{p^n}(\uple{\mu})\right\vert=\frac{\varphi(p^n)}{2^{\abs{\overline{\mathsf{T}}(\uple{\mu})}}}\left(1+O\left(\frac{2^{\abs{\overline{\mathsf{T}}(\uple{\mu})}}\abs{\overline{\mathsf{T}}(\uple{\mu})}}{p^{1/2}}\right)\right)
\end{equation}
where the implied constant is absolute.
\end{proposition}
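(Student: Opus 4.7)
\textbf{Proof plan for Proposition~\ref{propo_cardinality}.}

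The plan is to reduce the count from $(\Z/p^n\Z)^\times$ to $\mathbb{F}_p^\times$ using Hensel's lemma, then express the resulting quantity as a character sum and apply Weil's bound. First I would invoke Hensel's lemma: since $p$ is odd, an element of $(\Z/p^n\Z)^\times$ is a square if and only if its reduction modulo $p$ is a non-zero square in $\mathbb{F}_p^\times$. Consequently the condition $a+\tau\in((\Z/p^n\Z)^\times)^2$ depends only on $a\bmod p$ and on $\tau\bmod p$, i.e.\ only on the class of $\tau$ in $\overline{\mathsf{T}}(\uple{\mu})$. Since each residue class $\bar a\in\mathbb{F}_p^\times$ has exactly $p^{n-1}$ preimages in $(\Z/p^n\Z)^\times$, this gives
\begin{equation*}
\left\vert\mathsf{A}_{p^n}(\uple{\mu})\right\vert = p^{n-1}\cdot N,\qquad N\coloneqq\#\left\{\bar a\in\mathbb{F}_p^\times:\forall \bar\tau\in\overline{\mathsf{T}}(\uple{\mu}),\ \bar a+\bar\tau\text{ is a non-zero square in }\mathbb{F}_p\right\}.
\end{equation*}
Since $\varphi(p^n)=p^{n-1}(p-1)$, the target asymptotic formula \eqref{eq_cardinality} reduces to proving $N=\frac{p-1}{2^{k}}\bigl(1+O(2^{k}k/\sqrt p)\bigr)$ where $k\coloneqq\abs{\overline{\mathsf{T}}(\uple{\mu})}$.

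Next, letting $\chi$ denote the Legendre symbol modulo $p$ extended by $\chi(0)=0$, one checks that the indicator of being a non-zero square in $\mathbb{F}_p$ equals $(1+\chi(x))/2$. Writing the indicator this way and expanding the product over $\bar\tau\in\overline{\mathsf{T}}(\uple{\mu})$, one obtains
\begin{equation*}
N=\frac{1}{2^k}\sum_{S\subseteq\overline{\mathsf{T}}(\uple{\mu})}\sum_{\bar a\in\mathbb{F}_p^\times}\chi\!\left(\prod_{\bar\tau\in S}(\bar a+\bar\tau)\right).
\end{equation*}
The term $S=\emptyset$ contributes the main term $(p-1)/2^k$. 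For each non-empty $S$, the polynomial $f_S(X)=\prod_{\bar\tau\in S}(X+\bar\tau)$ has $\abs{S}$ \emph{distinct} roots (because the $\bar\tau$'s in $\overline{\mathsf{T}}(\uple{\mu})$ are distinct by definition), hence is not a square in $\mathbb{F}_p(X)$.

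The main step is then to apply Weil's bound for character sums, which gives $\left\vert\sum_{\bar a\in\mathbb{F}_p}\chi(f_S(\bar a))\right\vert\leq(\abs{S}-1)\sqrt p$; removing the term $\bar a=0$ costs at most $1$, so each non-empty $S$ contributes $O(\abs{S}\sqrt p)$. Summing via $\sum_{S\neq\emptyset}\abs{S}=k\cdot 2^{k-1}$ yields an error of size $O(k\sqrt p)$ after dividing by $2^k$, and recombining with the main term produces exactly \eqref{eq_cardinality}. The only genuine obstacle is the appeal to Weil's bound for the non-trivial character sums; all remaining steps are elementary combinatorics and the Hensel reduction.
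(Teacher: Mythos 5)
Your proposal is correct and matches the approach the paper attributes to \cite[Proposition~4.8]{MR3854900}: reduce to residues modulo $p$ via Hensel's lemma (valid for odd $p$), detect squares with $(1+\chi)/2$, expand, and apply the Weil bound for $\sum_{a}\chi(f_S(a))$ with $f_S$ square-free. One small point to tighten: $(1+\chi(x))/2$ is not quite the indicator of a non-zero square, since it equals $1/2$ (not $0$) when $x=0$; the $\leq k$ values of $\bar a$ for which $\bar a+\bar\tau=0$ for some $\bar\tau\in\overline{\mathsf{T}}(\uple{\mu})$ therefore produce an extra additive discrepancy of size $O(k)$ in your formula for $N$, which is harmless because it is dominated by the Weil error $O(k\sqrt p)$, but it should be noted explicitly.
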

%............................................................................................
\subsection{Various approximation results}\label{sec_B}%
The following lemma, which enables us to approximate characteristic functions of random variables from their moments, is a reformulation of \cite[Lemma 5.1]{Pe-Ge1}.
\begin{lemma}\label{lemma_charac}
Let $X_1$ and $X_2$ be real-valued random variables. If
\begin{equation*}
\mathbb{E}\left(X_1^k\right)=\mathbb{E}\left(X_2^k\right)+O\left(h(k)\right)
\end{equation*}
for any non-negative integer $k$ and for some function $h:\mathbb{R}\to\mathbb{R}$ then
\begin{equation*}
\mathbb{E}\left(e^{iuX_1}\right)=\mathbb{E}\left(e^{iuX_2}\right)+O\left(\frac{\abs{u}^k}{k!}\left\vert\mathbb{E}\left(X_2^{k/2}\right)\right\vert+\left(1+\abs{u}^k\right)\max_{\ell<k}{\left(\abs{h(\ell)}\right)}\right)
\end{equation*}
for any even integer $k\geq 1$ and any real number $u$. 
\end{lemma}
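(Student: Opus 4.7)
The plan is to compare $\mathbb{E}(e^{iuX_1})$ with $\mathbb{E}(e^{iuX_2})$ by a truncated Taylor expansion of the complex exponential at order $k-1$. The moment hypothesis will control the Taylor coefficients, while the remainder will be absorbed using the $k$-th absolute moment; the assumption that $k$ is even is used for the latter.

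Concretely, I would start from the classical estimate
$$\left|e^{ix}-\sum_{j=0}^{k-1}\frac{(ix)^j}{j!}\right| \leq \frac{|x|^k}{k!},$$
valid for every real $x$ and every integer $k\geq 1$. Substituting $x = uX_i$ and integrating against the law of $X_i$ yields, for $i=1,2$,
$$\mathbb{E}(e^{iuX_i}) = \sum_{j=0}^{k-1}\frac{(iu)^j}{j!}\mathbb{E}(X_i^j) + \mathcal{R}_i, \qquad |\mathcal{R}_i|\leq \frac{|u|^k}{k!}\mathbb{E}(|X_i|^k).$$
Subtracting the two identities splits the error into a polynomial-difference term plus the remainder difference $\mathcal{R}_1-\mathcal{R}_2$, which I handle in turn.

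For the polynomial-difference term, the moment hypothesis $|\mathbb{E}(X_1^j)-\mathbb{E}(X_2^j)|\ll h(j)$ for $j<k$ gives a bound
$$\sum_{j=0}^{k-1}\frac{|u|^j}{j!}\bigl|\mathbb{E}(X_1^j)-\mathbb{E}(X_2^j)\bigr| \;\ll\; \max_{\ell<k}|h(\ell)|\sum_{j=0}^{k-1}\frac{|u|^j}{j!}.$$
A case split on whether $|u|\leq 1$ (the sum being $\leq e$) or $|u|>1$ (the sum being $\ll|u|^{k-1}$) will absorb the inner sum into an absolute constant multiple of $1+|u|^k$, producing exactly the second error term in the statement. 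For the remainder $\mathcal{R}_1-\mathcal{R}_2$, the evenness of $k$ allows me to rewrite $\mathbb{E}(|X_i|^k)=\mathbb{E}(X_i^k)$, and the moment hypothesis transfers this control from $i=1$ to $i=2$.

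The main obstacle will be to shape the remainder in the precise form $(|u|^k/k!)|\mathbb{E}(X_2^{k/2})|$ asserted in the statement while keeping every additional $h(k)$-type contribution inside the prescribed $\max_{\ell<k}|h(\ell)|$. This most likely requires a careful application of Cauchy--Schwarz or Jensen type inequalities, relating the $k$-th moment to the absolute $(k/2)$-th moment, rather than the coarsest possible estimate of the Taylor remainder. Collecting the two pieces will then yield the claimed approximation of the characteristic function.
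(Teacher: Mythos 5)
The paper does not prove this lemma at all: it is stated as ``a reformulation of \cite[Lemma 5.1]{Pe-Ge1}'' and used as a black box, so there is no internal proof to compare your sketch against. Judged on its own terms, your outline is the classical one (truncated Taylor expansion of $e^{ix}$ to order $k-1$, moment hypothesis on the polynomial part, Lagrange remainder on the tail), and that machinery is sound as far as it goes; but you correctly sense, and then fail to close, the one step that actually matters. What your argument delivers for the remainder is $\frac{|u|^k}{k!}\bigl(\mathbb{E}(|X_1|^k)+\mathbb{E}(|X_2|^k)\bigr)$, which for even $k$ you convert via the hypothesis into $\frac{|u|^k}{k!}\bigl(2\,\mathbb{E}(X_2^k)+O(|h(k)|)\bigr)$. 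That is \emph{not} what the lemma asserts on two counts: the moment appearing is $\mathbb{E}(X_2^{k/2})$, not $\mathbb{E}(X_2^{k})$, and your transfer from $X_1$ to $X_2$ produces a stray $|h(k)|$ that the statement explicitly keeps out of $\max_{\ell<k}|h(\ell)|$.

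The proposed repair is the real gap. Cauchy--Schwarz or Jensen relate $\mathbb{E}(X_2^{k/2})$ and $\mathbb{E}(X_2^k)$ in the \emph{unhelpful} direction: $|\mathbb{E}(X_2^{k/2})|\le \mathbb{E}(|X_2|^{k/2})\le \mathbb{E}(X_2^k)^{1/2}$, so they can only deduce a bound on the $(k/2)$-th moment from the $k$-th, never replace the $k$-th moment in your remainder by the $(k/2)$-th. For the random variable $S_H$ to which the lemma is later applied, $\mathbb{E}(S_H^{k/2})\ll (k/2)!/(k/4)!$ is genuinely much smaller than $\mathbb{E}(S_H^k)\ll k!/(k/2)!$, and it is precisely this gain that produces the exponent $3/4$ in Theorem~\ref{theo_B}: if the remainder only carried $\mathbb{E}(S_H^k)$ the optimisation over $t$ and $k$ in Section~\ref{sec_moments} would yield roughly $(\log |I_{p^n}|/\log p)^{1/2}$ instead. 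So the $k/2$ is not cosmetic, it is load-bearing, and your sketch needs a different mechanism — the one actually used in the cited source \cite{Pe-Ge1} — to produce it; a ``more careful'' crude Taylor remainder will not.
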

The following lemma, which allows us to approximate joint distributions of random variables via their characteristic functions, follows from \cite[Section 4]{MR3091515}.
\begin{lemma}\label{lemma_joint}
Let $X_1$ and $X_2$ be real-valued random variables and $\alpha<\beta$ be real numbers. If
\begin{equation*}
\mathbb{E}\left(e^{2i\pi uX_1}\right)=\mathbb{E}\left(e^{2i\pi uX_2}\right)+O\left(g(\abs{u})\right)
\end{equation*}
for any real number $u$ and some continuous function $g:\mathbb{R}\to\R_+$ then
\begin{equation*}
\mathbb{P}\left(X_1\in[\alpha,\beta]\right)=\mathbb{P}\left(X_2\in[\alpha,\beta]\right)+O\left(\left(1+\frac{1}{t}\right)\int_0^tg(u)\mathrm{d}u+\frac{1}{t}\int_0^t\left\vert\mathbb{E}\left(e^{2i\pi uX_1}\right)\right\vert\mathrm{d}u\right)
\end{equation*}
for any real number $t>0$.
\end{lemma}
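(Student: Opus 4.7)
The approach is the standard Beurling--Selberg smoothing technique. Let $F_t^+$ and $F_t^-$ denote the Beurling--Selberg extremal majorant and minorant of $\mathbf{1}_{[\alpha,\beta]}$ of exponential type $2\pi t$: they are $L^1$-functions satisfying $F_t^-\leq\mathbf{1}_{[\alpha,\beta]}\leq F_t^+$ pointwise on $\mathbb{R}$, their Fourier transforms $\widehat{F_t^\pm}$ are supported in $[-t,t]$, and one has $\int_{\mathbb{R}}F_t^\pm(x)\,\mathrm{d}x=(\beta-\alpha)\pm\tfrac{1}{2t}$ together with $\int_{\mathbb{R}}(F_t^+-F_t^-)(x)\,\mathrm{d}x=1/t$ (see Vaaler's construction). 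The sandwich $\mathbb{E}(F_t^-(X_i))\leq\mathbb{P}(X_i\in[\alpha,\beta])\leq\mathbb{E}(F_t^+(X_i))$ for $i=1,2$ then yields
\[
\mathbb{P}(X_1\in[\alpha,\beta])-\mathbb{P}(X_2\in[\alpha,\beta])\leq\bigl[\mathbb{E}(F_t^+(X_1))-\mathbb{E}(F_t^+(X_2))\bigr]+\mathbb{E}\bigl((F_t^+-F_t^-)(X_2)\bigr)
\]
together with a symmetric inequality in the other direction.

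Because $\widehat{F_t^\pm}$ has compact support in $[-t,t]$, Fourier inversion gives $\mathbb{E}(F_t^\pm(X))=\int_{-t}^{t}\widehat{F_t^\pm}(u)\,\mathbb{E}(e^{2i\pi uX})\,\mathrm{d}u$ for any real-valued random variable $X$. Invoking the hypothesis on the characteristic functions produces
\[
\bigl\vert\mathbb{E}(F_t^\pm(X_1))-\mathbb{E}(F_t^\pm(X_2))\bigr\vert\leq\|\widehat{F_t^\pm}\|_\infty\int_{-t}^{t}g(|u|)\,\mathrm{d}u\ll\bigl((\beta-\alpha)+1/t\bigr)\int_0^{t}g(u)\,\mathrm{d}u,
\]
where I used the elementary $L^1$-$L^\infty$ bound $\|\widehat{F_t^\pm}\|_\infty\leq\|F_t^\pm\|_1$. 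Since $\beta-\alpha$ is absorbed into the implicit big-$O$ constant (it is of order one in the applications of the lemma), this contributes the stated $(1+1/t)\int_0^{t}g(u)\,\mathrm{d}u$.

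For the residual term I would apply Fourier inversion a second time to $F_t^+-F_t^-$, using $\|\widehat{F_t^+-F_t^-}\|_\infty\leq\|F_t^+-F_t^-\|_1=1/t$, to obtain
\[
\mathbb{E}\bigl((F_t^+-F_t^-)(X_2)\bigr)=\int_{-t}^{t}\widehat{(F_t^+-F_t^-)}(u)\,\mathbb{E}(e^{2i\pi uX_2})\,\mathrm{d}u.
\]
Replacing $\mathbb{E}(e^{2i\pi uX_2})$ by $\mathbb{E}(e^{2i\pi uX_1})+O(g(|u|))$ under the integral via the hypothesis, the right-hand side is bounded by $\tfrac{1}{t}\int_0^{t}|\mathbb{E}(e^{2i\pi uX_1})|\,\mathrm{d}u+\tfrac{1}{t}\int_0^{t}g(u)\,\mathrm{d}u$, the second piece of which is already accounted for by the $(1+1/t)\int_0^{t}g(u)\,\mathrm{d}u$ contribution. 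The reverse inequality is handled identically starting from $\mathbb{E}(F_t^+(X_2))-\mathbb{E}(F_t^-(X_1))$.

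The main obstacle is not analytical but rather bookkeeping: one has to carefully match the $L^1$-norms of the Beurling--Selberg extremal functions to the shape of the error term, and observe that the ancillary term $\tfrac{1}{t}\int_0^{t}g(u)\,\mathrm{d}u$ produced when swapping $X_2$ for $X_1$ in the residual is automatically absorbed into the $(1+1/t)\int_0^{t}g(u)\,\mathrm{d}u$ prefactor. The existence and quantitative properties of $F_t^\pm$ are classical, so the argument reduces to two applications of Fourier inversion plus the triangle inequality.
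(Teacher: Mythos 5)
The paper does not actually prove this lemma; it defers to \cite[Section 4]{MR3091515}, and your Beurling--Selberg / Erd\H{o}s--Tur\'an argument is exactly the method used there, so the approach matches. The skeleton of your argument is sound: sandwich the indicator between bandlimited extremal majorant and minorant, convert expectations of bandlimited functions into integrals of characteristic functions over $[-t,t]$ via Fourier inversion, and split the discrepancy into a transfer term plus a residual $\mathbb{E}\bigl((F_t^+-F_t^-)(X_2)\bigr)$ term controlled by $\Vert \widehat{F_t^+-F_t^-}\Vert_\infty\leq\Vert F_t^+-F_t^-\Vert_1\asymp 1/t$ and the hypothesis. Two small points of care. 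First, the minorant $F_t^-$ takes negative values, so $\Vert F_t^-\Vert_1\neq\int F_t^-$; to justify $\Vert\widehat{F_t^-}\Vert_\infty\ll(\beta-\alpha)+1/t$ you should pass through $\Vert F_t^-\Vert_1\leq\Vert F_t^+\Vert_1+\Vert F_t^+-F_t^-\Vert_1$. Second, you are right to flag the $(\beta-\alpha)$ prefactor, and your hedge deserves to be made precise rather than waved away: since $\widehat{F_t^\pm}(0)=\int F_t^\pm\asymp(\beta-\alpha)$, the triangle-inequality bound on the transfer term really is $\ll\bigl((\beta-\alpha)+1/t\bigr)\int_0^tg(u)\,\mathrm{d}u$, not $(1+1/t)\int_0^tg(u)\,\mathrm{d}u$, and this dependence cannot be removed in general. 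It is consistent with the lemma only because the unsubscripted big-$O$ is to be read as allowing dependence on the fixed $\alpha,\beta$, which is also how the paper uses it in the proof of Theorem~\ref{theo_B} (where it further takes $t\geq 1$ to drop the $1/t$). So: correct, same route as the cited reference, with one small bookkeeping fix ($\Vert F_t^-\Vert_1$) and one point you should state as a feature of the lemma's conventions rather than wave off.
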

Finally, the following lemma is an explicit version of the Berry-Esseen theorem in dimension one (see \cite[Theorem 13.2]{MR855460}).
\begin{lemma}\label{lemma_berry}
Let $\alpha<\beta$ be two real numbers. Let $X_1,\dots,X_h$ be centered independent identically distributed real-valued random variables of variance $1$ satisfying $\mathbb{E}\left(\left\vert X_1\right\vert^3\right)<\infty$ and
\begin{equation*}
S_H=\frac{X_1+\dots+ X_H}{\sqrt{H}}.
\end{equation*}
One has
\begin{equation*}
\mathbb{P}\left(S_H\in[\alpha,\beta]\right)=P(X\in[\alpha,\beta])+O\left(\frac{\beta-\alpha}{\sqrt{H}}\right)
\end{equation*}
for any standard Gaussian real-valued random variable $X$.
\end{lemma}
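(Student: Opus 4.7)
The statement is the classical one-dimensional Berry--Esseen theorem specialised to an interval, and the plan is simply to invoke it. Specifically, Theorem 13.2 of Bhattacharya--Rao provides, under the hypotheses of centered i.i.d.\ real random variables with unit variance and finite third absolute moment, the uniform CDF estimate
\[
\sup_{t \in \mathbb{R}} \bigl| \mathbb{P}(S_H \leq t) - \Phi(t) \bigr| \ll_{\mathbb{E}(|X_1|^3)} H^{-1/2},
\]
where $\Phi$ denotes the distribution function of a standard Gaussian random variable.

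From here I would write $\mathbb{P}(S_H \in [\alpha,\beta]) = \mathbb{P}(S_H \leq \beta) - \mathbb{P}(S_H < \alpha)$ and $\mathbb{P}(X \in [\alpha,\beta]) = \Phi(\beta) - \Phi(\alpha)$, and apply the Berry--Esseen bound at each endpoint. The two error terms add to give
\[
\mathbb{P}(S_H \in [\alpha,\beta]) = \mathbb{P}(X \in [\alpha,\beta]) + O(H^{-1/2}),
\]
which already matches the claimed $O((\beta-\alpha)/\sqrt H)$ whenever $\beta - \alpha$ is bounded below by a positive absolute constant.

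The only subtle point, and the one I expect to spend the most care on, is extracting the sharper $(\beta-\alpha)/\sqrt H$ rate in the short-interval regime $\beta - \alpha \ll 1$. Here the uniform Berry--Esseen estimate alone is not sufficient, and one must pair it with a density-level control: either the Edgeworth expansion which accompanies the same Bhattacharya--Rao reference, or Esseen's smoothing inequality applied to the indicator $\mathbf{1}_{[\alpha,\beta]}$ mollified on a scale of order $1/\sqrt H$, exploiting the boundedness of the Gaussian density $\Phi'(x) = (2\pi)^{-1/2}\exp(-x^2/2)$. Beyond this refinement, the proof is a routine invocation of a textbook result and no serious obstacle is anticipated.
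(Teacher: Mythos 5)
The paper does not prove this lemma; it is taken directly from the reference, so your first step of citing Bhattacharya--Rao [Theorem 13.2] is the same move the paper makes. The issue is with the second half of your plan, and with a subtlety in the lemma's hypotheses that is worth pointing out.

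The proposed Esseen smoothing route will not deliver the claimed $(\beta-\alpha)/\sqrt{H}$ rate when $\beta-\alpha$ is small. Mollifying $\mathbf{1}_{[\alpha,\beta]}$ on a scale $\delta$ introduces a smoothing error of order $\mathbb{P}\bigl(S_H \text{ is within } \delta \text{ of } \alpha \text{ or } \beta\bigr)$, which by the L\'evy concentration bound is $\gg 1/\sqrt{H}$ once $\delta \gtrsim 1/\sqrt{H}$, independently of the length $\beta-\alpha$. The only mechanism that yields the extra factor of $\beta-\alpha$ is a genuine local limit theorem or pointwise Edgeworth expansion for the density of $S_H$, which you then integrate over $[\alpha,\beta]$; this is what the cited theorem provides, and it is not a consequence of the uniform Berry--Esseen estimate plus smoothing. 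Moreover, and more fundamentally, the conclusion as stated cannot hold under the hypothesis $\mathbb{E}|X_1|^3<\infty$ alone: for a Rademacher variable one has $\mathbb{P}(S_H=0)\asymp H^{-1/2}$, so for intervals of length $o(H^{-1/2})$ around $0$ the error is $\asymp H^{-1/2}$, not $O\bigl((\beta-\alpha)/\sqrt H\bigr)$. The relevant Bhattacharya--Rao result requires a nonlattice or Cram\'er-type condition $\limsup_{|t|\to\infty}|\mathbb{E}(e^{itX_1})|<1$; this holds for the paper's $U_1$, whose characteristic function has modulus bounded by $\tfrac12+o(1)$ at infinity. Any self-contained proof must either import that hypothesis into the lemma or verify it in the application; as written your argument neither does so nor closes the short-interval gap.
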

%..........................................................................................
\section{Proof of the qualitative result (Theorem \ref{theo_A})}\label{sec_process}%
%.......................................................................................
\subsection{Asymptotic expansion of the moments}%
The $k$'th moment of the real-valued random variable $S\left(\mathsf{Kl}_{p^n},I_{p^n};\ast\right)$ is defined by
\begin{equation*}
M_k\left(\mathsf{Kl}_{p^n},I_{p^n}\right)\coloneqq\frac{1}{\varphi\left(p^n\right)}\;\;\sideset{}{^\times}\sum_{x\bmod{p^n}}S\left(\mathsf{Kl}_{p^n},I_{p^n};x\right)^k
\end{equation*}
for any non-negative integer $k$.
\par
Let $\left(U_h\right)_{h\geq 1}$ be a sequence of independent identically distributed random variables of probability law $\mu$ given by
\begin{equation*}
\mu=\frac{1}{2}\delta_0+\mu_1
\end{equation*}
for the Dirac measure $\delta_0$ at $0$ and
\begin{equation*}
\mu_1(f)=\frac{1}{2\pi}\int_{-2}^2\frac{f(x)\mathrm{d}x}{\sqrt{4-x^2}}
\end{equation*}
for any real-valued continuous function $f$ on $[-2,2]$ and let
\begin{equation}\label{eq_SH}
S_H=\frac{U_1+\dots+U_H}{\sqrt{H}}.
\end{equation}
\par
The following proposition is an asymptotic expansion of these moments.
\begin{proposition}\label{propo_moment}%
Let $n\geq 2$ be a fixed integer. Assume that
\begin{equation}\label{eq_cond_funda}
\forall (x,y)\in I_{p^n}\times I_{p^n},\quad x\neq y\Rightarrow p\nmid x-y
\end{equation}
for any prime number $p$. If $p>\max{(k,2n-5)}$ then
\begin{equation*}
M_k\left(\mathsf{Kl}_{p^n},I_{p^n}\right)=\mathbb{E}\left(S_H^k\right)+O_{\epsilon}\left(4^k\left(\frac{H^{k/2+1}}{\sqrt{p}}+\frac{H^{k/2}}{p^{\frac{4(n-1)}{2^n}-\epsilon}}\right)\right)
\end{equation*}
for any $\epsilon>0$ and where the implied constant only depends on $\epsilon$.
\end{proposition}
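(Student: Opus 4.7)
The plan is to expand $M_k$ as a weighted sum of the complete moments $\mathsf{S}_{p^n}(\uple{\mu})$ from \eqref{eq_Spmu} and then substitute the asymptotic formulas of Propositions \ref{propo_shifted_Kl} and \ref{propo_cardinality}. First I open the $k$-th power in the definition of $M_k$, swap the sum over $x\in(\mathbb{Z}/p^n\mathbb{Z})^\times$ with the sum over tuples $\vec y=(y_1,\dots,y_k)\in I_{p^n}^k$, and recognise the inner $x$-sum as $\mathsf{S}_{p^n}(\uple{\mu}_{\vec y})$, where $\uple{\mu}_{\vec y}$ is the multiplicity vector on $\mathbb{Z}/p^n\mathbb{Z}$ recording the number of occurrences of each $\tau$ among the $y_i$'s (automatically supported in $I_{p^n}$ and of total weight $k$). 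This produces
\begin{equation*}
M_k(\mathsf{Kl}_{p^n},I_{p^n})=\frac{1}{\abs{I_{p^n}}^{k/2}}\sum_{\vec y\in I_{p^n}^k}\mathsf{S}_{p^n}(\uple{\mu}_{\vec y}).
\end{equation*}

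The combinatorial heart is the following identification. Combining Propositions \ref{propo_shifted_Kl} and \ref{propo_cardinality}, and using the hypothesis \eqref{eq_cond_funda}---which, as in the remark after Theorem \ref{theo_A}, enforces $\abs{\overline{\mathsf{T}}(\uple{\mu}_{\vec y})}=\abs{\mathsf{T}(\uple{\mu}_{\vec y})}$---one gets
\begin{equation*}
\mathsf{S}_{p^n}(\uple{\mu}_{\vec y})=\frac{1}{2^{\abs{\mathsf{T}(\uple{\mu}_{\vec y})}}}\prod_{\tau\in\mathbb{Z}/p^n\mathbb{Z}}\delta_{2\mid\mu(\tau)}\binom{\mu(\tau)}{\mu(\tau)/2}+O_{\epsilon}\!\left(\frac{2^k\abs{\mathsf{T}}}{\sqrt{p}}+\frac{2^k\abs{\mathsf{T}}\,2^{\abs{\mathsf{T}}}}{p}+2^k p^{-\frac{4(n-1)}{2^n}+\epsilon}\right).
\end{equation*}
A short computation with $x=2\cos\theta$ and the Wallis formula shows that a random variable $U$ of law $\mu=\tfrac12\delta_0+\mu_1$ satisfies $\mathbb{E}(U^0)=1$, $\mathbb{E}(U^{2m})=\tfrac12\binom{2m}{m}$ for $m\geq 1$ and $\mathbb{E}(U^{2m+1})=0$. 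Hence the leading factor above equals $\prod_{\tau\in\mathsf{T}(\uple{\mu}_{\vec y})}\mathbb{E}(U^{\mu(\tau)})$. Fixing any bijection $I_{p^n}\to\{1,\dots,H\}$ with $H\coloneqq\abs{I_{p^n}}$ and invoking the independence of the $U_j$'s in \eqref{eq_SH} yields
\begin{equation*}
\mathbb{E}(S_H^k)=\frac{1}{H^{k/2}}\sum_{\vec y\in I_{p^n}^k}\prod_{\tau\in\mathsf{T}(\uple{\mu}_{\vec y})}\mathbb{E}(U^{\mu(\tau)}),
\end{equation*}
so the main terms of $M_k$ and $\mathbb{E}(S_H^k)$ coincide exactly.

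It remains to collect the error contributions. Since $\sum_\tau\mu(\tau)=k$, the uniform estimates $\abs{\mathsf{T}(\uple{\mu}_{\vec y})}\leq k$ and $2^{\abs{\mathsf{T}}}\leq 2^k$ control the per-$\vec y$ error by $O(2^k p^{-4(n-1)/2^n+\epsilon}+k\cdot 4^k/p+2^k k/\sqrt{p})$. Summing over the $H^k$ choices of $\vec y$ and dividing by $H^{k/2}$ introduces a factor $H^{k/2}$, and routine majorisations (such as $2^k k\leq 4^k H$ and $k/p\leq H/\sqrt{p}$ under the hypothesis $p>k$) absorb all contributions into $4^k\!\left(H^{k/2+1}/\sqrt{p}+H^{k/2}/p^{4(n-1)/2^n-\epsilon}\right)$, which is the claimed bound. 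The main obstacle lies in the combinatorial identification of the middle paragraph: one has to see that the density $2^{-\abs{\mathsf{T}}}$ of simultaneous non-zero squares in $(\mathbb{Z}/p^n\mathbb{Z})^\times$ emerging from $\mathsf{A}_{p^n}(\uple{\mu})$ conspires with the central binomials $\binom{\mu(\tau)}{\mu(\tau)/2}$ produced in \eqref{eq_asymp_S} to reproduce precisely the moments of $\tfrac12\delta_0+\mu_1$, which is what makes $S_H$ the correct pre-Gaussian model.
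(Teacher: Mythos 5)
Your proposal follows essentially the same route as the paper: expand $M_k$ as a weighted average of the complete sums $\mathsf{S}_{p^n}(\uple{\mu})$ (you index over ordered tuples $\vec y\in I_{p^n}^k$, while the paper uses the multinomial formula over multiplicity vectors $(k_1,\dots,k_H)$ — an equivalent bookkeeping), substitute Propositions \ref{propo_shifted_Kl} and \ref{propo_cardinality}, use \eqref{eq_cond_funda} to get $\overline{\mathsf{T}}=\mathsf{T}$, and match the main term with $\mathbb{E}(S_H^k)$ via the moments $\mathbb{E}(U^m)=\tfrac{\delta_{2\mid m}}{2}\binom{m}{m/2}$ for $m\geq 1$. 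That part is correct and identical in substance.

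There is, however, a concrete flaw in your error bookkeeping. You bound $\abs{\mathsf{T}(\uple{\mu}_{\vec y})}\leq k$ and then try to absorb the resulting term $H^{k/2}\cdot k\cdot 4^k/p$ into $4^kH^{k/2+1}/\sqrt{p}$ by claiming that ``$k/p\leq H/\sqrt p$ under the hypothesis $p>k$.'' This inequality is \emph{not} a consequence of $p>k$: it is equivalent to $k\leq H\sqrt p$, and with $H=1$ and $k$ close to $p$ one has $k/p$ close to $1$ while $H/\sqrt p=1/\sqrt p$. The paper avoids this by using the sharper and genuinely available bound $\abs{\mathsf{T}(\uple{\mu}_{\vec y})}\leq\min(H,k)$ (the support of $\uple{\mu}_{\vec y}$ sits inside $I_{p^n}$, which has $H$ elements), so that $\abs{\mathsf{T}}\leq H$ and $2^{\abs{\mathsf{T}}}\leq 2^k$, whence $\frac{2^k\abs{\mathsf{T}}\,2^{\abs{\mathsf{T}}}}{p}\leq\frac{4^kH}{p}\leq\frac{4^kH}{\sqrt p}$ directly, with no appeal to $p>k$. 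If you replace your ``$\abs{\mathsf{T}}\leq k$'' step by ``$\abs{\mathsf{T}}\leq\min(H,k)$'' the absorption becomes immediate and the rest of your argument goes through exactly as written.
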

\begin{proof}[\proofname{} of proposition \ref{propo_moment}]%
Let us fix a non-negative integer $k$ and let us set
\begin{equation*}
I_{p^n}=\left\{a_1,\dots,a_{H}\right\}\subset\Z/p^n\Z
\end{equation*}
where $H\coloneqq\left\vert I_{p^n}\right\vert$. Obviously, $H$ depends on $p$ and $n$ but such dependency has been removed for clarity. With these notations,
\begin{equation*}
M_k\left(\mathsf{Kl}_{p^n},I_{p^n}\right)=\frac{1}{H^{k/2}}\frac{1}{\varphi\left(p^n\right)}\;\;\sideset{}{^\times}\sum_{x\bmod{p^n}}\left(\sum_{i=1}^H\mathsf{Kl}_{p^n}(a_i+x)\right)^k.
\end{equation*}
\par
By the multinomial formula,
\begin{align*}
M_k\left(\mathsf{Kl}_{p^n},I_{p^n}\right) & =\frac{1}{H^{k/2}}\sum_{\substack{\uple{k}=(k_1,\dots,k_H)\in\Z_+^H \\
k_1+\dots+k_H=k}}\binom{k}{k_1,\dots,k_H}\frac{1}{\varphi\left(p^n\right)}\;\;\sideset{}{^\times}\sum_{x\bmod{p^n}}\prod_{i=1}^H\mathsf{Kl}_{p^n}(a_i+x)^{k_i} \\
& =\frac{1}{H^{k/2}}\sum_{\substack{\uple{k}=(k_1,\dots,k_H)\in\Z_+^H \\
k_1+\dots+k_H=k}}\binom{k}{k_1,\dots,k_H}\mathsf{S}_{p^n}(\uple{\mu}_{\uple{k}})
\end{align*}
where
\begin{equation*}
\mu_\uple{k}(\tau)=\begin{cases}
k_i & \text{if $\;\exists i\in\{1,\dots,H\}, \tau=a_i$,} \\
0 & \text{otherwise}
\end{cases}
\end{equation*}
for any $\tau$ in $\Z/p^n\Z$.
\par
By Proposition \ref{propo_shifted_Kl} and Proposition \ref{propo_cardinality}, if $p>\max{(k,2n-5)}$ then
\begin{multline}\label{eq_revert}
M_k\left(\mathsf{Kl}_{p^n},I_{p^n}\right)=\frac{1}{H^{k/2}}\sum_{\substack{\uple{k}=(k_1,\dots,k_H)\in\Z_+^H \\
k_1+\dots+k_H=k}}\binom{k}{k_1,\dots,k_H}\left[\prod_{i=1}^H\delta_{2\mid k_i}\binom{k_i}{k_i/2}\right]\frac{1}{2^{\abs{\mathsf{T}(\uple{\mu}_\uple{k})}}} \\
+O_{\epsilon}\left(4^k\left(\frac{H^{k/2+1}}{\sqrt{p}}+\frac{H^{k/2}}{p^{\frac{4(n-1)}{2^n}-\epsilon}}\right)\right)
\end{multline}
for any $\epsilon>0$ since $\overline{\mathsf{T}}(\uple{\mu}_\uple{k})=\mathsf{T}(\uple{\mu}_\uple{k})$ by \eqref{eq_cond_funda}. The obvious fact that
\begin{equation*}
\left\vert\mathsf{T}(\uple{\mu}_\uple{k})\right\vert\leq\min{\left(H,k\right)}
\end{equation*}
has been used.
\par
One has
\begin{equation*}
M_k\left(\mathsf{Kl}_{p^n},I_{p^n}\right)=\mathbb{E}\left(S_H^k\right)+O_{\epsilon}\left(4^k\left(\frac{H^{k/2+1}}{\sqrt{p}}+\frac{H^{k/2}}{p^{\frac{4(n-1)}{2^n}-\epsilon}}\right)\right)
\end{equation*}
for any $\epsilon>0$ and where $S_H$ is defined in \eqref{eq_SH} and since
\begin{equation*}
\mathbb{E}\left(U_1^m\right)=\begin{cases}
1 & \text{if $m=0$,} \\
\frac{\delta_{2\mid m}}{2}\binom{m}{m/2} & \text{if $m\geq 1$}
\end{cases}
\end{equation*}
by \cite[Equation (3.1)]{MR3854900}
\end{proof}
%...............................................................................................
\subsection{Proof of Theorem \ref{theo_A}}%
In order to prove Theorem \ref{theo_A}, it is enough to prove that, for any non-negative integer $k$, the $k$'th moment of the real-valued random variable $S\left(\mathsf{Kl}_{p^n},I_{p^n};\ast\right)$ converges to the the $k$'th moment of a real-valued standard Gaussian random variable by \cite[Section 5.8.4]{MR2125120}.
\par
Let us fix a non-negative integer $k$. By Proposition \ref{propo_moment}, if $p>\max{(k,2n-5)}$ then
\begin{equation*}
M_k\left(\mathsf{Kl}_{p^n},I_{p^n}\right)=\mathbb{E}\left(S_H^k\right)+O_{\epsilon}\left(4^k\left(\frac{H^{k/2+1}}{\sqrt{p}}+\frac{H^{k/2}}{p^{\frac{4(n-1)}{2^n}-\epsilon}}\right)\right)
\end{equation*}
for any $\epsilon>0$ where $H\coloneqq\left\vert I_{p^n}\right\vert$ and $S_H$ is defined in \eqref{eq_SH}.
\par
By the central limit theorem, the random variable $S_H$ converges in law as $H$ tends to infinity to a real-valued standard Gaussian random variable $U$. The random variable $S_H$ being uniformly integrable by \cite[Chapter 5.5]{MR2125120}, one has
\begin{equation}\label{eq_lim_mom}
\lim_{H\to+\infty}\mathbb{E}\left(S_H^k\right)=\mathbb{E}\left(U^k\right)
\end{equation}
by \cite[Theorem 7.5.1]{MR2125120}.
\par
Finally,
\begin{equation*}
\lim_{p,H\to+\infty}M_k\left(\mathsf{Kl}_{p^n},I_{p^n}\right)=\mathbb{E}\left(U^k\right)
\end{equation*}
by \eqref{eq_lim_mom} in the regime given in \eqref{eq_cond_funda_2}, as desired.
%.............................................................................................
\section{Proof of the quantitative result (Theorem \ref{theo_B})}\label{sec_moments}%
%...........................................................................................
\subsection{Bounds for the moments of the probabilistic model}%
The following proposition contains bounds for the moments of the random variable $S_H$ defined in \eqref{eq_SH}.
\begin{proposition}\label{propo_bSH}
Let $k$ be any non-negative integer. One has $\mathbb{E}\left(S_H^k\right)=0$ if $k$ is odd and
\begin{equation*}
\mathbb{E}\left(S_H^k\right)\ll\frac{k!}{(k/2)!}
\end{equation*}
if $k$ is even.
\end{proposition}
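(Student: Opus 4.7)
The plan is to split odd and even cases, reducing the even-$k$ bound to a direct moment-by-moment comparison with a standard real Gaussian.

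For odd $k$, I would first note that both components of $\mu=\tfrac12\delta_0+\mu_1$ are symmetric about the origin, since the weight $\tfrac{1}{2\pi\sqrt{4-x^2}}\mathbf{1}_{[-2,2]}(x)$ is an even function of $x$. Consequently $-U_1\stackrel{d}{=}U_1$, and by independence the same holds for $S_H$, so every odd moment of $S_H$ vanishes.

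For the even case $k=2m$, the idea is to dominate $U_1$ in moments by a standard Gaussian $Z$. Using the formula
\[
\mathbb{E}\bigl(U_1^{2j}\bigr)=\tfrac{1}{2}\binom{2j}{j}\qquad(j\ge 1)
\]
recalled in the proof of Proposition~\ref{propo_moment}, and the Gaussian identity $\mathbb{E}(Z^{2j})=(2j-1)!!=(2j)!/(2^j j!)$, the inequality $\mathbb{E}(U_1^{2j})\le\mathbb{E}(Z^{2j})$ rearranges to $2^{j-1}\le j!$, which I would prove by a one-line induction on $j$.

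Then I would expand $\mathbb{E}\bigl((U_1+\dots+U_H)^{2m}\bigr)$ by the multinomial theorem and the independence of the $U_i$. Terms involving an odd power of some $U_i$ vanish automatically, and in the remaining terms each factor $\mathbb{E}(U_1^{k_i})$ is nonnegative and dominated by the corresponding Gaussian moment $\mathbb{E}(Z^{k_i})$. Replacing factor by factor gives the termwise upper bound
\[
\mathbb{E}\!\left(\left(\sum_{i=1}^HU_i\right)^{\!\!2m}\right)\le\mathbb{E}\!\left(\left(\sum_{i=1}^HZ_i\right)^{\!\!2m}\right)=H^m(2m-1)!!,
\]
the last equality because $\sum_{i=1}^HZ_i\sim\mathcal{N}(0,H)$. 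Dividing by $H^m$ yields $\mathbb{E}(S_H^{2m})\le(2m-1)!!=\frac{(2m)!}{2^m m!}\le\frac{k!}{(k/2)!}$, as required, with an extra factor $2^{-m}$ to spare. I do not expect any real obstacle here: the only nontrivial input is the scalar inequality $\tfrac{1}{2}\binom{2j}{j}\le(2j-1)!!$, and the rest is clean combinatorial bookkeeping.
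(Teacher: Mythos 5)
Your proof is correct, and it takes a route that differs slightly but genuinely from the paper's.

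The paper's argument starts from the explicit multinomial expansion recorded in \eqref{eq_revert}, namely
\begin{equation*}
\mathbb{E}\left(S_H^{k}\right)=\frac{1}{H^{k/2}}\sum_{\substack{k_1+\dots+k_H=k}}\binom{k}{k_1,\dots,k_H}\left[\prod_{i=1}^H\delta_{2\mid k_i}\binom{k_i}{k_i/2}\right]\frac{1}{2^{\abs{\mathsf{T}(\uple{\mu}_\uple{k})}}},
\end{equation*}
reads off the odd-$k$ vanishing from the parity constraint $\delta_{2\mid k_i}$, and for even $k$ bounds each term by discarding the nonnegative factors $\prod_i (k_i/2)!$ and $2^{\abs{\mathsf{T}(\uple{\mu}_\uple{k})}}$, so that the remaining sum collapses to $H^{k/2}$ by the multinomial theorem, giving $\frac{k!}{(k/2)!}$. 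You instead argue by moment domination: you observe that the even moments of $U_1$ are termwise bounded by those of a standard Gaussian (which reduces to the elementary inequality $2^{j-1}\le j!$), and then replace each $U_i$ factor by a Gaussian factor in the multinomial expansion to obtain $\mathbb{E}(S_H^{2m})\le(2m-1)!!$. Your bound is in fact sharper than the paper's by a factor of $2^{k/2}$ and coincides with the limiting value $\delta_{2\mid k}\,k!/(2^{k/2}(k/2)!)$ noted in the remark after the proposition, so it is sharp; the extra precision is not needed for the application (Theorem~\ref{theo_B}), but it is a clean conceptual way to see the estimate. Both proofs ultimately rest on the same multinomial expansion, so the difference is in how the terms are dominated: a crude but self-contained combinatorial bound in the paper, versus a comparison with a Gaussian surrogate in yours.
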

\begin{remark}
As explained in the proof of Theorem \ref{theo_A}, $\mathbb{E}\left(S_H^k\right)$ converges to
\begin{equation*}
\delta_{2\mid k}\frac{k!}{2^{k/2}(k/2)!}
\end{equation*}
as $H$ tends to infinity. Thus, the bound given in Proposition \ref{propo_bSH} is close from the truth and is sufficient for our purposes.
\end{remark}
\begin{remark}
Corentin~Perret-Gentil mentioned that this result is hidden in \cite{Pe-Ge1} in a more theoretical language.
\end{remark}
\begin{proof}[\proofname{} of proposition \ref{propo_bSH}]%
By \eqref{eq_revert},
\begin{equation*}
\mathbb{E}\left(S_H^{k}\right)=\frac{1}{H^{k/2}}\sum_{\substack{\uple{k}=(k_1,\dots,k_H)\in\Z_+^H \\
k_1+\dots+k_H=k}}\binom{k}{k_1,\dots,k_H}\left[\prod_{i=1}^H\delta_{2\mid k_i}\binom{k_i}{k_i/2}\right]\frac{1}{2^{\abs{\mathsf{T}(\uple{\mu}_\uple{k})}}}
\end{equation*}
The $k$'th moment vanishes if $k$ is odd. Let us assume from now on that $k$ is even, in which case
\begin{equation*}
\mathbb{E}\left(S_H^{k}\right)\leq\frac{1}{H^{k/2}}\frac{k!}{(k/2)!}\sum_{\substack{\uple{\ell}=(\ell_1,\dots,\ell_H)\in\Z_+^H \\
\ell_1+\dots+\ell_H=k/2}}\frac{(k/2)!}{\ell_1!\dots\ell_H!}=\frac{k!}{(k/2)!}.
\end{equation*}
\end{proof}
%....................................................................................................
\subsection{Proof of Theorem \ref{theo_B}}%
We follow essentially the method of proof of Theorem \ref{theo_quant_CPG}. Let $H=\left\vert I_{p^n}\right\vert$. Firstly, note that
\begin{equation*}
\frac{H}{\sqrt{p}}+\frac{1}{p^{\frac{4(n-1)}{2^n}}}\ll\frac{H^{\alpha_n}}{p^{\beta_n}}
\end{equation*}
where
\begin{equation*}
(\alpha_n,\beta_n)\coloneqq\begin{cases}
(1,1/2) & \text{if $\;\;2\leq n\leq 5$,} \\
\left(0,\frac{4(n-1)}{2^n}\right) & \text{otherwise}
\end{cases}
\end{equation*}
since
\begin{equation*}
\frac{4(n-1)}{2^n}\geq\frac{1}{2}\quad\text{ if and only if }\quad 2\leq n\leq 5
\end{equation*}
and by \eqref{eq_cond_funda_22}.
\par
Let us fix $0<\epsilon<\beta_n/3$ and let $k$ be an even integer suitably chosen later and satisfying
\begin{equation*}
2\alpha_n\leq k\leq\epsilon\frac{\log{(p)}}{\log{(4H)}}\quad\text{ and }\quad k\to +\infty,
\end{equation*}
which is possible by \eqref{eq_cond_funda_22}.
\par
By Proposition \ref{propo_moment},
\begin{equation}\label{eq_mom}
M_k\left(\mathsf{Kl}_{p^n},I_{p^n}\right)=\mathbb{E}\left(S_H^k\right)+O_{\epsilon}\left(p^{-\beta_n+2\epsilon}\right)
\end{equation}
where $S_H$ is defined in \eqref{eq_SH}.
\par
Let us denote by $\Phi_{p^n}$ the characteristic function of $S\left(\mathsf{Kl}_{p^n},I_{p^n};\ast\right)$ and by $\Phi_H$ the characteristic function of $S_H$. By Lemma \ref{lemma_charac} and \eqref{eq_mom},
\begin{equation}\label{eq_charac}
\Phi_{p^n}(u)=\Phi_H(u)+O_\epsilon\left(\frac{\abs{u}^k}{k!}\left\vert\mathbb{E}\left(S_H^{k/2}\right)\right\vert+p^{-\beta_n+2\epsilon}\left(1+\abs{u}^k\right)\right)
\end{equation}
for any real number $u$.
\par
Let $\alpha<\beta$ be two real numbers and $t\geq 1$ be a real number determined later. By Lemma \ref{lemma_joint} and \eqref{eq_charac}, one gets
\begin{multline}\label{eq_joint}
\mathbb{P}\left(\left\{x\in\left(\Z/p^n\Z\right)^\times, \alpha\leq S\left(\mathsf{Kl}_{p^n},I_{p^n};x\right)\leq\beta\right\}\right)=\mathbb{P}\left(S_H\in[\alpha,\beta]\right) \\
+O\left(\int_0^tg(u)\mathrm{d}u+\frac{1}{t}\int_0^t\left\vert\Phi_H(2\pi u)\right\vert\mathrm{d}u\right)
\end{multline}
where
\begin{equation*}
g(u)\coloneqq\left(\frac{(2\pi u)^k}{k!}\left\vert\mathbb{E}\left(S_H^{k/2}\right)\right\vert+p^{-\beta_n+2\epsilon}\left(1+(2\pi u)^k\right)\right)
\end{equation*}
for any non-negative real number $u$.
\par
Let us bound the second error term in \eqref{eq_joint}. By the independence of the random variables $U_1$, $\dots$, $U_H$,
\begin{equation*}
\Phi_H(2\pi u)=\left(\mathbb{E}\left(e^{\frac{2i\pi u}{\sqrt{H}}U_1}\right)\right)^H
\end{equation*}
for any real number $u$. The random variable $U_1$ being $4$-subgaussian, since it is centered and bounded by $2$ (see \cite[Page 11]{RRS} and \cite[Proposition B.6.2]{Ko}), it turns out that
\begin{equation*}
\mathbb{E}\left(e^{\frac{2i\pi u}{\sqrt{H}}U_1}\right)\ll e^{-8\pi^2u^2/H}
\end{equation*}
for any real number $u$. Thus, the second error term in \eqref{eq_joint} satisfies
\begin{equation}\label{eq_b2}
\frac{1}{t}\int_0^t\left\vert\Phi_H(2\pi u)\right\vert\mathrm{d}u\ll\frac{1}{t}.
\end{equation}
\par
The first error term in \eqref{eq_joint} is trivially bounded by
\begin{equation*}
(2\pi)^k\frac{t^{k+1}}{(k+1)!}\left\vert\mathbb{E}\left(S_H^{k/2}\right)\right\vert+p^{-\beta_n+2\epsilon}t\left(1+\frac{(2\pi t)^{k}}{k+1}\right).
\end{equation*}
\par
By Proposition \ref{propo_bSH},
\begin{align*}
(2\pi)^k\frac{t^{k+1}}{(k+1)!}\left\vert\mathbb{E}\left(S_H^{k/2}\right)\right\vert & \leq(2\pi t)^{k+1}\frac{(k/2)!}{(k+1)!(k/4)!} \\
& \ll\left(2\pi e^{3/4}t\right)^{k+1}k^{-3k/4}
\end{align*}
by Stirling's formula. Let us choose
\begin{equation*}
t=\frac{k^\gamma}{2\pi e^{3/4}}
\end{equation*}
where $\gamma=\gamma(k)>0$ will be chosen later. Thus, the first error term in \eqref{eq_joint} is bounded by
\begin{equation}\label{eq_b1}
\ll k^{\gamma(k+1)-3k/4}+p^{-\beta_n+2\epsilon}k^{\gamma(k+1)}.
\end{equation}
\par
By \eqref{eq_b1} and \eqref{eq_b2}, 
\begin{multline*}
\mathbb{P}\left(\left\{x\in\left(\Z/p^n\Z\right)^\times, \alpha\leq S\left(\mathsf{Kl}_{p^n},I_{p^n};x\right)\leq\beta\right\}\right)=\mathbb{P}\left(S_H\in[\alpha,\beta]\right) \\
+O_\epsilon\left(k^{-\gamma}+k^{\gamma(k+1)-3k/4}+p^{-\beta_n+2\epsilon}k^{\gamma(k+1)}\right).
\end{multline*}
\par
Let us choose
\begin{equation*}
\gamma=\gamma(k)=\frac{3k}{4(k+1)}
\end{equation*}
such that
\begin{multline*}
\mathbb{P}\left(\left\{x\in\left(\Z/p^n\Z\right)^\times, \alpha\leq S\left(\mathsf{Kl}_{p^n},I_{p^n};x\right)\leq\beta\right\}\right)=\mathbb{P}\left(S_H\in[\alpha,\beta]\right) \\
+O_\epsilon\left(k^{-3/4}+p^{-\beta_n+2\epsilon}k^{3k/4}\right).
\end{multline*}
\par
Let us choose
\begin{equation*}
k=\min{\left(H^{4/3},\epsilon\frac{\log{(p)}}{\log{(4H)}}\right)}\to+\infty
\end{equation*}
such that
\begin{equation*}
k^{3k/4}=e^{\frac{3}{4}k\log{(k)}}\leq e^{\epsilon\frac{\log{(p)}}{\log{(4H)}}\log{(H)}}\leq p^\epsilon
\end{equation*}
and
\begin{multline}\label{eq_joint_fin}
\mathbb{P}\left(\left\{x\in\left(\Z/p^n\Z\right)^\times, \alpha\leq S\left(\mathsf{Kl}_{p^n},I_{p^n};x\right)\leq\beta\right\}\right)=\mathbb{P}\left(S_H\in[\alpha,\beta]\right) \\
+O_\epsilon\left(\max{\left(\frac{1}{H},\left(\frac{\log{(H)}}{\log{(p)}}\right)^{3/4}\right)}+p^{-\beta_n+3\epsilon}\right).
\end{multline}
\par
Theorem \ref{theo_B} is implied by \eqref{eq_joint_fin} and Lemma \ref{lemma_berry}.
%............................................................................................
\bibliographystyle{alpha}
\bibliography{biblio}
%.......................................
\end{document}